\providecommand{\abs}[1]{\left| #1 \right|}
\providecommand{\norm}[1]{\left\| #1 \right\|}
\providecommand{\tonde}[1]{\left( #1 \right)}
\providecommand{\quadre}[1]{\left[ #1 \right]}
\newcommand{\R}{\mathbb{R}}
\newcommand{\N}{\mathbb{N}}
\lbrace\begin{array}{@{}l@{}}}%
\numberwithin{equation}{section}
\newtheorem{thm}{Theorem}[section]
\newtheorem{cor}[thm]{Corollary}
\newtheorem{prop}[thm]{Proposition}
\theoremstyle{definition}
\theoremstyle{definition}
\theoremstyle{definition}
\theoremstyle{remark}
\newtheorem{rem}[thm]{Remark}
\def\1{\mathbb I}
\def\l{\lambda}
\def\m{\mu}
 \def\r{\rho}
\def\Ent{\hbox{Ent}}
\def\half{\frac{1}{2}}
\title{Hypercontractivity of  a  semi-Lagrangian scheme for  Hamilton-Jacobi equations}
\author{F. Camilli\thanks{camilli@sbai.uniroma1.it}, P. Loreti\thanks{loreti@sbai.uniroma1.it}, C. Pocci\thanks{pocci@sbai.uniroma1.it}\\
SBAI, Dip. di Scienze di Base e Applicate per l'Ingegneria,\\
"Sapienza", Universit{\`a} Roma
 }
\begin{document}
\maketitle
\begin{abstract}
The equivalence between logarithmic Sobolev inequalities and hypercontractivity  of solutions of Hamilton-Jacobi equations
has been proved in \cite{BGL}. We consider a semi-Lagrangian approximation scheme for the Hamilton-Jacobi equation
and we prove that the solution of the discrete problem satisfies a   hypercontractivity  estimate. We apply this property to obtain
an error estimate of the set where the truncation error is concentrated.
\end{abstract}
\begin{description}
\item [{\bf MSC 2000}:]   49M25, 65N15.
\item [{\bf Keywords}:] Hamilton-Jacobi equation, semi-Lagrangian scheme, hypercontractivity, error estimate.
\end{description}
\section{Introduction}
Consider the Hamilton-Jacobi equations
\begin{equation}\label{HJ}
   u_t+H(Du)=0,\qquad x\in\R^N,\,t>0
\end{equation}
where $H:\R^N\to \R$ is   convex and $\lim_{\abs p\to +\infty} \frac{H(p)}{p}=+\infty$. For $f$   a Lipschitz continuous function, define
\begin{equation}\label{HL}
    S_tf(x)=\inf_{y\in\R^N}\left\{f(y)+tL\left(\frac{x-y}{t}\right)\right\}
\end{equation}
where $L$ is the Legendre transform of $H$, i.e.
\begin{equation}\label{Lag}
 L(q)=\sup_{q\in\R^N}\{p\cdot q-H(p)\}.
\end{equation}
The family $(S_t)_{t\ge 0}$ defines a semigroup with infinitesimal generator $-H(Df)$ and
the solution of the equation \eqref{HJ} with initial datum $u(x,0)=f(x)$ is given by $u(x,t)=S_tf(x)$.\\
The link between Hamilton-Jacobi equations with $H(p)=\frac{\abs{p}^2}{2}$ and logarithmic Sobolev inequality (LSI in short) is given in \cite{BGL}. We recall that the classical LSI can  be written as
\begin{equation}\label{LSI}
\mathcal E_{\mu}(u^2)\leq \frac{2}{\rho}\int_{\R^N}\abs{Du}^2 d\mu,
\end{equation}
where
\begin{equation*}\label{entropia}
\mathcal E_{\mu}(u^2)=\int_{\R^n} u^2 \log u^2 d\mu-\int_{\R^n} u^2 d\mu \log \int_{\R^n} u^2
d\mu,
\end{equation*}
 $\mu$ is a probability measure and $\rho$ is a positive real number. The typical example of measure  satisfying the inequality \eqref{LSI} is the canonical Gaussian measure with density $(2\pi)^{-N/2} e^{-|x|^2/2}$ with respect to the Lebesgue measure in $\R^N$ (in this case $\rho=1$). \\
In \cite{BGL} the authors prove that if $\mu$ satisfies  \eqref{LSI}, then for every $t>0$ and $a\in\R$ the solution of \eqref{HJ} satisfies the hypercontractivity estimate
\begin{equation}\label{Hyp1}
    \|e^{S_t f}\|_{a+\rho t}\le \|e^f\|_a
\end{equation}
($\norm{\cdot}_p$ is the $L^p$-norm associated to the   measure  $\mu$).
Conversely, if \eqref{Hyp1} holds for all $t>0$ and some $a\neq 0$ then \eqref{LSI} holds. \\
Aim of this paper is to show that a similar hypercontractivity property holds for a semi-Lagrangian approximation of      \eqref{HJ}. Semi-Lagrangian schemes
are a well studied class of approximation schemes for Hamilton-Jacobi equation (see \cite{BCD}, \cite{FF}). For a fixed discretization step $h$, the semi-Lagrangian scheme generates a discrete-time semigroup $(Q_n)_{n\in \N}$  and the  solution of the approximate      equation
 with initial datum $u(x,0)=f(x)$ is given by $u(x,nh)=(Q_n f)(x)$.
 We show that  if $\mu$ satisfies \eqref{LSI}, then the  semigroup $(Q_n)_{n\in \N}$ satisfies at the discrete  time $nh$  the
 hypercontractivity estimate
 \begin{equation}\label{Hyp2}
    \|e ^{Q_n f}\|_{\l_n}\le  \|e ^{f}\|_{a}\prod_{k=1}^{n} (1+C\l_{k-1} h )^{\frac{1}{\l_{k}}},
    \end{equation}
where $\l_n=a+\r n h$.
And, as in the continuous case, also the converse is true.\\
 It is by now classical  that approximation schemes for Hamilton-Jacobi equations give  an error estimate of order $h^{1/2}$ in the $L^\infty$-norm, while $L^p$-estimates are known only in some particular cases (see \cite{bfz}, \cite{le}, \cite{lt}).  We apply  \eqref{Hyp2} to give an estimate of the set where the truncation error is concentrated showing that its measure decays exponentially. We note that similar estimates are obtained for the approximation of stochastic differential equations via Euler schemes (see \cite{LM}, \cite{MT}).\par
 The paper is organized as follows.\\
 In Section \ref{Section1} we study the property of the discrete-time  semigroup. In Section \ref{Section2} we prove the equivalence between the hypercontractivity estimate and the logarithmic Sobolev inequality with respect to a gaussian measure, while in Section \ref{Section3} we study a similar property for the Lebesgue measure. Finally, in  Section \ref{Section4}, we prove the concentration estimate.

%%%%%%%%%%%%%%%%%%%
%                 %
%%%%%%%%%%%%%%%%%%%

\section{The discrete semigroup}\label{Section1}
Fixed a discretization step $h>0$ and given a continuous function $u_0$,  consider the   semi-Lagrangian scheme for
 \eqref{HJ}  (see \cite{FF})
\begin{equation}\label{HJh}
    \frac{u_{n+1}(x)-u_n(x)}{h}+\sup_{q\in\R^N}\{-\frac{u_n(x-hq)-u_n(x)}{h}-L(q)\}=0,
\end{equation}
or equivalently
\begin{equation}\label{HJhb}
     u_{n+1}(x)=\inf_{q\in\R^N}\{ u_n(x-hq) +hL(q)\}.
\end{equation}
Given a continuous function $f$,  define
\begin{equation}\label{SGh}
 ( Q_nf) (x)=\inf_{q_0,\dots,q_{n-1}\in \R^N}\left\{f\Big(x-\sum_{k=0}^{n-1}hq_k\Big)+\sum_{k=0}^{n-1}
    hL(q_k)\right\}
\end{equation}
% where $y_n^x=x-\sum_{k=0}^{n-1}hq_k=\sum_{k=0}^{n-1}(y_k^x-y_{k-1}^x)$
(with the convention that $\sum_{k=0}^{-1}=0$).
In the next proposition we show that the family $(Q_n \cdot)_{n\in \N}$ generates a discrete-time semigroup
giving the solution of \eqref{HJh} with initial datum $u_0(x)=f(x)$.
%%%
%%%
\begin{prop}\label{SG}\hfill
\begin{itemize}
  \item [1)]$(Q_{n+m}f)(x)=(Q_n(Q_mf))(x)$ for any $n,m \in \N$
and $(Q_0f)(x)=f(x)$. Moreover
%$(Q_n f)$ is decreasing in $n$, i.e. $(Q_nf)(x)\le (Q_mf)(x)$ for any $n\le m$, $x\in\R^N$, and
$(Q_n(f+c))(x)=(Q_n f)(x)+c$ for any constant $c\in \R$.\vskip 6pt
\item [2)] $u_n(x)=(Q_n f)(x)$ is the solution of \eqref{HJh} with $u_0(x)=f(x)$.
\item[3)] If $f$ is bounded and Lipschitz continuous,  then $Q_n f$ is bounded and Lipschitz continuous and
\begin{align}
  & \|Q_{n+1}f\|_\infty\le \|Q_n f\|_\infty+ h\max\{-H(0),L(0)\}\quad \text{for any $n\in\N$}\label{P1} \\
  &\left|(Q_nf)(x)-(Q_nf)(y)\right|\le \|D f\|_\infty |x-y|\quad \text{for any $n\in\N$, $x,y\in\R^N$}\label{P2}  \\
  &\left|(Q_m f)(x)-(Q_nf)(x)\right|\le C |n-m|h\quad\text{for any $n\in\N$, $x\in\R^N$.}\label{P3}
\end{align}
with $C$ independent of $h$.
\item [4)] If $f$ is semiconcave, then $Q_n f$ is semiconcave in $x$, uniformly in $n$.
%\item[4)]  $Q_nf\to  S_tf$ for $nh\to t$, locally uniformly in $\R^N$.
\end{itemize}
\end{prop}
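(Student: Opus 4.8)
The plan is to prove the four items essentially by unwinding the definition \eqref{SGh} and exploiting the structure of the infimum over control sequences $q_0,\dots,q_{n-1}$.

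For item 1), the semigroup property is a direct consequence of splitting the sum in \eqref{SGh}: writing a control sequence of length $n+m$ as the concatenation of one of length $m$ (controlling the first steps) and one of length $n$ (controlling the remaining steps), and using the translation invariance of the infimum, one gets $(Q_{n+m}f)(x)=(Q_n(Q_mf))(x)$. The identity $(Q_0f)(x)=f(x)$ is the empty-sum convention, and $(Q_n(f+c))(x)=(Q_nf)(x)+c$ is immediate since the additive constant $c$ factors out of the infimum.

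For item 2), I would verify that $u_n=Q_nf$ satisfies the one-step recursion \eqref{HJhb}; this is exactly the $n=1$, $m=n$ case (or $n=1$ applied to $Q_nf$) of the semigroup property just established, combined with the equivalence of \eqref{HJh} and \eqref{HJhb} (rewrite the $\sup$ in \eqref{HJh}, clear denominators). For item 3), boundedness and the bound \eqref{P1} follow by taking $q=0$ in \eqref{HJhb} for one inequality and using the coercivity of $L$ (the $\sup$ in \eqref{HJh} is bounded below by $-L(q)$ evaluated suitably, and $-H(0)=\inf_q L(q)$ controls the other side); more precisely $\inf_q\{u_n(x-hq)+hL(q)\}\le u_n(x)+hL(0)$ and $\ge \inf u_n + h\inf_q L(q)=\inf u_n - hH(0)$, giving \eqref{P1}. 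The Lipschitz estimate \eqref{P2} is the standard observation that an infimum of translates of $f$ (plus an $x$-independent penalty) inherits the Lipschitz constant of $f$: for any control sequence, $f(x-\sum hq_k)$ and $f(y-\sum hq_k)$ differ by at most $\|Df\|_\infty|x-y|$, and the $\sum hL(q_k)$ term cancels, so the same bound survives the infimum. Estimate \eqref{P3} follows by comparing $Q_nf$ and $Q_mf$ one step at a time using \eqref{P1}-type reasoning: each additional step changes the value by at most $Ch$ with $C=\max\{-H(0),L(0)\}$ (or more carefully, using the Lipschitz bound, $C=\max\{L(0),\; \sup_{|q|\le\|Df\|_\infty}(\|Df\|_\infty|q|-L(q))\}$ depending on how tightly one wants to track constants), so telescoping over $|n-m|$ steps gives $C|n-m|h$ with $C$ independent of $h$.

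For item 4), semiconcavity: recall $f$ semiconcave means $x\mapsto f(x)-K|x|^2$ is concave for some $K\ge 0$, equivalently $f(x+z)+f(x-z)-2f(x)\le 2K|z|^2$. I would show the same modulus $K$ is preserved: for $z\in\R^N$, test the infimum defining $(Q_nf)(x+z)$ and $(Q_nf)(x-z)$ with the (near-)optimal control sequence for $(Q_nf)(x)$, so that the penalty terms $\sum hL(q_k)$ appear identically in all three expressions and cancel, leaving exactly $f(\xi+z)+f(\xi-z)-2f(\xi)\le 2K|z|^2$ where $\xi=x-\sum hq_k$; hence $(Q_nf)(x+z)+(Q_nf)(x-z)-2(Q_nf)(x)\le 2K|z|^2$, uniformly in $n$. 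The main obstacle — really the only place requiring care — is the bookkeeping of constants in \eqref{P1} and \eqref{P3}: one must check that the $\max\{-H(0),L(0)\}$ bound is correct (this uses $H(0)=\sup_p(-L(p)) \ge -L(0)$, i.e.\ $-H(0)\le L(0)$, or the reverse depending on sign conventions, together with $H(0)=-\inf_p L(p)$ from Legendre duality) and that $C$ in \eqref{P3} genuinely does not depend on $h$, which is why the Lipschitz bound \eqref{P2} — giving an a priori bound on the relevant optimal $q_k$ — may be needed as an input to \eqref{P3}. All other steps are routine manipulations of infima.
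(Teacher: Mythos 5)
Your proposal is correct and follows essentially the same route as the paper: split the control sequence for 1), use the semigroup identity $Q_{n+1}f=Q_1(Q_nf)$ for 2), take $q=0$ and $L(q)\ge -H(0)$ for \eqref{P1}, let the $x$-independent penalty $\sum hL(q_k)$ cancel for \eqref{P2} and for 4), and control the optimal $q^*$ for \eqref{P3}. One caution: your first-stated mechanism for \eqref{P3} (``\eqref{P1}-type reasoning'' with $C=\max\{-H(0),L(0)\}$) does not work, since \eqref{P1} only bounds sup-norms and gives no pointwise lower bound on $u_{n+1}(x)-u_n(x)$; the correct argument is your parenthetical one, $u_{n+1}(x)-u_n(x)=u_n(x-hq^*)-u_n(x)+hL(q^*)\ge -h\sup_{q}\{\|Df\|_\infty|q|-L(q)\}$ with the supremum over \emph{all} $q$ (finite by superlinearity of $L$), which is exactly what the paper does by observing that the argmin set $M_n(x)$ is contained in a ball $B(0,R_n)$ with $R_n$ bounded uniformly in $n$ and $h$.
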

\begin{proof}
To prove 1),  observe that
\begin{align*}
    &(Q_n(Q_mf))(x)=\inf_{q_1,\dots,q_n}\left\{(Q_m f)\Big(x-\sum_{k=0}^{n-1}hq_k\Big)+\sum_{k=0}^{n-1}
    hL(q_k)\right\}\\
    &=\inf_{q_1,\dots,q_n}\left\{\inf_{q_{n+1},\dots,q_m}\left\{f\Big(x-\sum_{k=0}^{n-1}hq_k-\sum_{k=0}^{m-1}hq_{n+1+k}\Big)+\sum_{k=0}^{m-1}
    hL(q_{n+1+k})\right\}+\sum_{k=0}^{n-1} hL(q_k)\right\}\\
    &=(Q_{n+m}f)(x).
\end{align*}
The commutativity with the constants is immediate.\\
%%%%%
Set  $u_n=Q_nf$. By  1), $u_{n+1}(x)=(Q_{n+1}f)(x)=(Q_1(Q_nf))(x)=(Q_1 u_n)(x)$, hence $u_n$ is the  solution of \eqref{HJhb}  with $u_0(x)=(Q_0f)(x)=f(x)$.\\
%Rewriting this property with $y=x-hq$  we get
%\begin{equation}\label{semigr1}
%(Q_{n+1} f)(x)= \inf_y \left\{Q_nf(y)+hL\left(\frac{x-y}{h}\right)\right\}
%\end{equation}
%which implies that .\\
By \eqref{HJhb} for $q=0$, we have
\begin{equation}\label{e1}
u_{n+1}(x)\le u_n(x)+hL(0).
\end{equation}
Moreover since $L(q)\ge -H(0)$ for any $q\in\R^N$, we have
\begin{equation}\label{e2}
u_{n+1}(x)\ge -\|u_n\|_\infty-hH(0).
\end{equation}
By \eqref{e1} and \eqref{e2}, we get \eqref{P1} for $u_n=Q_nf$.\\
Since $u_n=Q_nf$ is continuous and $L$ is superlinear, there exists $R_n$ (increasing with respect to $n$ but upper bounded uniformly in $n$ and $h$) such that, defined $M_n(x)=\arg\inf\{u_n(x-hq)+hL(q)\}$, then $M_n(x)\subset B(0,R_n)$ and the infimum in \eqref{HJhb} is obtained. Given $x,y\in\R^N$ and $q^*\in M_n(x)$, by \eqref{HJhb}   we get
\begin{align*}
    u_{n+1}(x)-u_{n+1}(y)\le u_n(x-hq^*)-u_n(y-hq^*)\le \|Du_n\|_\infty|x-y|=\|D(Q_nf)\|_\infty|x-y|.
\end{align*}
Iterating in the previous inequality, we get \eqref{P2}. \\
For $q^*\in M_n(x)$ in \eqref{HJhb}, we have
\[u_{n+1}(x)-u_n(x)=u_n(x-hq^*)-u_n(x)+hL(q^*)\le \|Du_n\|_\infty h|q^*|+hL(q^*)\]
which gives \eqref{P3} since $M_n(x)$ is uniformly bounded. \\
%%%%%
%
%
% By 1), if $z$ is such that
%$Q_{n+1}f(x)=Q_nf(z)+hL((x-z)/h)$, then
%\[Q_{n+1}f(x)-Q_{n+1}f(y)\le f (z)+hL(\frac{x-z}{h})-f(z)-hL(\frac{y-z}{h})\le C_L|x-y|.  ??????????    \]
%First observe that by coercivity of $H$,  $q$ realizing the minimum in \eqref{semigr1} can be taken in a bounded set, uniformly in $x$. Since $H$ is superlinear, also $L$ is super linear, then
%\begin{align*}
%0&\le \frac{Q_{n }f(x)-Q_{n+1 }f(x)}{h}=\sup_q\Big\{\frac{Q_nf(x)-Q_nf(x-hq)}{h}-L(q)\Big\}\le\\
% &\sup_q\{C_L|q|-L(q)\}\leq C.
%\end{align*}\\
%By the nonnegativity of $H$ and    taking $y=x$ in \eqref{semigr1} we get $ Q_{n+1}f(x)\le Q_nf(x)$ and iterating  $Q_{n+1}f(x)\le %Q_0f(x)=f(x)$.
%Moreover  $Q_{n+1}f(x)\ge \inf_{q_1,\dots,q_n}\{\inf_{\R^N} f+\sum_{k=0}^{n-1} hL(q_k)\} \ge \inf_{\R^N} f$.
Assume that $u_n$ is  semi-concave  with constant  $C_n$. If $q^* \in M_n(x)$, then
\begin{align*}
     u_{n+1}(x)&=u_n(x-hq^*)+hL(q^*)\\
     u_{n+1}(x\pm y)&\le u_n(x-hq^*\pm y)+hL(q^*).
\end{align*}
Hence
\begin{align*}
 u_{n+1}(x+y )&+u_{n+1}(x-y )-2u_{n+1}(x) \\
 &\le u_n(x-hq^*+ y)+u_n(x-hq^*- y)-2u_n(x-hq^*)\le   C_n|y|^2.
\end{align*}
Hence $C_{n+1}\le C_n $ and iterating
\[C_{n+1}\le C_0\]
where $C_0$ is the  semi-concavity constant of $u_0=f$.
\end{proof}
%%%%%%%%%
\begin{rem}
Note that \eqref{HJhb}
can be rewritten as
\[(Q_{n+1}f)(x)=\inf_{q\in\R^N}\left\{(Q_nf)(x)+hL\left(\frac{x-y}{h}\right)\right\}.\]
Hence the discrete semigroup $Q_n$  is obtained by  considering  the continuous
semigroup $S_{t}$ only at the discrete time $t=nh$, $n\in\N$.
\end{rem}
%%%%%%%%%%%%%%%%%
%               %
%%%%%%%%%%%%%%%%%

\section{Hypercontractivity of the discrete semigroup with respect to  Gaussian measures}\label{Section2}
We prove the hypercontractivity of the discrete semigroup $(Q_n \cdot)_{n\in \N}$ with respect to a measure $\mu$ satisfying \eqref{LSI}.  In this section $\norm{\cdot}_p$ is the $L^p$-norm associated to the   measure  $\mu$. We  only consider  the case  $H(p)=\half |p|^2$ and therefore $L(q)=\half |q|^2$, but the results can be extended to more general Hamiltonians as in \cite{BGL}.
%%%
\begin{thm}\label{stimeF}
    If $\m$ is absolutely continuous with respect to the Lebesgue measure and satisfies the  logarithmic Sobolev inequality \eqref{LSI}, then
    for any $f$ Lipschitz continuous and   for any $n$, any $h>0$,  $a \in\R$
    \begin{equation}\label{hyperconc2}
    \|e ^{Q_n f}\|_{\l_n}\le  \|e ^{f}\|_{a}\prod_{k=1}^{n} (1+C\l_{k-1} h )^{\frac{1}{\l_{k}}}.
    \end{equation}
    where $\l_n=a+\r n h$.\par
Conversely if \eqref{hyperconc2} holds for any smooth function $f$, for any $n$, any $h>0$ and some $a\neq 0$, then the measure $\m$
satisfies the logarithmic Sobolev inequality \eqref{LSI}.
\end{thm}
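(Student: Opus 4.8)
Since by \eqref{SGh} (equivalently, by the remark following Proposition~\ref{SG}) one has $Q_nf=S_{nh}f$, the direct inequality \eqref{hyperconc2} is in fact an immediate consequence of \eqref{Hyp1} of \cite{BGL}: indeed $\|e^{Q_nf}\|_{\l_n}=\|e^{S_{nh}f}\|_{a+\r nh}\le\|e^f\|_a\le\|e^f\|_a\prod_{k=1}^n(1+C\l_{k-1}h)^{1/\l_k}$, the product being $\ge1$. I would nevertheless give a self-contained argument at the discrete level, both because this is in the spirit of the paper and because it is the natural warm-up for the converse. The idea is to reduce to a one-step inequality and iterate. Writing $u_k=Q_kf$ and $K_k=\int_{\R^N}e^{\l_ku_k}\,d\m$, I aim at the per-step bound
\[
K_k^{1/\l_k}\ \le\ (1+C\l_{k-1}h)^{1/\l_k}\,K_{k-1}^{1/\l_{k-1}},\qquad k=1,\dots,n;
\]
multiplying these over $k$ and using $K_0^{1/\l_0}=\|e^f\|_a$ gives exactly \eqref{hyperconc2}.

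For the one-step bound I would use the infimal-convolution form \eqref{HJhb}. For $k\ge1$ the iterate $u_k$ is semiconcave: $u_1=Q_1f$ is the infimal convolution of $f$ with $|\cdot|^2/(2h)$, hence $\tfrac1h$-semiconcave, and by Proposition~\ref{SG}(4) the constants do not increase, so $C_k\le1/h$. Optimizing the choice of $q$ in \eqref{HJhb} and using the semiconcavity of $u_{k-1}$ gives, a.e. and for $k\ge2$, the discrete counterpart of $\partial_tu+\tfrac12|Du|^2=0$,
\[
u_k(x)\ \le\ u_{k-1}(x)-\a_{k-1}\,|Du_{k-1}(x)|^2,\qquad \a_{k-1}=\frac{h}{2(1+C_{k-1}h)}\in\big[\tfrac h4,\tfrac h2\big].
\]
Inserting this into $K_k$, using $\l_k=\l_{k-1}+\r h$ and expanding $e^{\r hu_{k-1}-\l_k\a_{k-1}|Du_{k-1}|^2}$ to first order in $h$, the main terms are $K_{k-1}$, a multiple of $\int u_{k-1}e^{\l_{k-1}u_{k-1}}d\m$ and a negative multiple of $\int|Du_{k-1}|^2e^{\l_{k-1}u_{k-1}}d\m$. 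I then apply \eqref{LSI} to $v=e^{\l_{k-1}u_{k-1}/2}$, which yields
\[
\l_{k-1}\!\int u_{k-1}e^{\l_{k-1}u_{k-1}}d\m-K_{k-1}\log K_{k-1}\ \le\ \frac{\l_{k-1}^2}{2\r}\int|Du_{k-1}|^2e^{\l_{k-1}u_{k-1}}d\m,
\]
bound the gradient integral from below by this, and note that the choice $\l_k-\l_{k-1}=\r h$ makes the $\int u_{k-1}e^{\l_{k-1}u_{k-1}}d\m$ term cancel to first order — the discrete version of the exact cancellation in \cite{BGL}. What survives is $K_k\le K_{k-1}+\tfrac{\r h}{\l_{k-1}}K_{k-1}\log K_{k-1}+R_k=K_{k-1}^{\l_k/\l_{k-1}}+R_k$ with a remainder $R_k$ of order $h$; since $K_{k-1}$ is bounded above and below (Proposition~\ref{SG}(1)--(3)) one absorbs $R_k$ into the factor $1+C\l_{k-1}h$. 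The first step $f\mapsto Q_1f$, where $f$ need not be semiconcave, is handled separately (either from \eqref{Hyp1} applied on the single time interval $[0,h]$, or by first proving the estimate for semiconcave data and approximating).

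For the converse, assume \eqref{hyperconc2} for all smooth $f$, all $n$, all $h>0$, and some $a\neq0$, say $a>0$. I would linearize in $h$. Taking $n=1$ and raising to the power $a+\r h$,
\[
\int_{\R^N}e^{(a+\r h)Q_1f}\,d\m\ \le\ (1+Cah)\Big(\int_{\R^N}e^{af}\,d\m\Big)^{(a+\r h)/a}.
\]
For smooth $f$, $Q_1f=S_hf=f-\tfrac h2|Df|^2+o(h)$ uniformly as $h\to0^+$; expanding both sides, subtracting $\int e^{af}d\m$, dividing by $h$ and letting $h\to0$ yields, after the substitution $g=e^{af/2}$ (so that $\int g^2\log g^2d\m=a\int fe^{af}d\m$ and $\int|Dg|^2d\m=\tfrac{a^2}{4}\int|Df|^2e^{af}d\m$), the logarithmic Sobolev inequality \eqref{LSI} for $g$; density of smooth functions then gives \eqref{LSI} in general.

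The step I expect to be the main obstacle is the control of the remainders $R_k$ in the direct part, made uniform in $k$ \emph{and} in $h$: the first-order expansions require $\int u_{k-1}e^{\l_{k-1}u_{k-1}}d\m$ and $\int|Du_{k-1}|^2e^{\l_{k-1}u_{k-1}}d\m$ to be finite and controlled — which holds because \eqref{LSI} forces $\m$ to have Gaussian tails while the $u_k$ share the Lipschitz constant of $f$ (Proposition~\ref{SG}(2)) — but since \eqref{hyperconc2} is asserted for every $h>0$ and the expansion is accurate only for small $h$, the large-$h$ regime must be dealt with separately (or simply deduced from $Q_n=S_{nh}$ and \cite{BGL}); moreover, because the semiconcavity constant of the iterates is only $O(1/h)$, the per-step error is $O(h)$ rather than $O(h^2)$, and one must verify that these losses telescope into $\log\prod_k(1+C\l_{k-1}h)^{1/\l_k}$ with $C$ independent of $h$. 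In the converse, the analogous delicate point is that the factor $(1+Cah)^{1/(a+\r h)}$ contributes an $O(h)$ term whose effect must be controlled in the limit that produces \eqref{LSI}, which is why the precise form of the constant in the forward estimate matters.
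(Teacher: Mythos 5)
Your observation that $Q_nf=S_{nh}f$ when $L(q)=|q|^2/2$ (by convexity the optimal $q_0,\dots,q_{n-1}$ in \eqref{SGh} may all be taken equal) is correct, and it does make the forward inequality an immediate corollary of \eqref{Hyp1}, at least for $a>0$ where every factor of the product exceeds $1$. The paper deliberately avoids this shortcut and runs the purely discrete argument you also sketch — the one-step bound $F_{n+1}\le F_n(1+C\l_nh)^{1/\l_{n+1}}$, obtained by applying \eqref{LSI} to $e^{\l_nQ_nf/2}$, then iterated — because the identical computation is reused in Theorem \ref{stimeErr} for $Q_nf-S_nf$, where no such identity is available. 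Where you differ is in how the one-step remainder is controlled: the paper notes that $Q_{n+1}f\le Q_nf$ (since $L(0)=0$), so the term $\l_n\int e^{\l_{n+1}Q_nf}\left[e^{\l_{n+1}(Q_{n+1}f-Q_nf)}-1\right]d\m$ is nonpositive and can simply be discarded, and it bounds the gradient term coming from the LSI by $Ch\l_n^2F_n^{\l_n}$ using only $|DQ_nf|\le\|Df\|_\infty$ from \eqref{P2}. This requires no semiconcavity and treats the first step like all the others. Your attempt to cancel the gradient term against the dissipation $u_k\le u_{k-1}-\a_{k-1}|Du_{k-1}|^2$ is sound but, as you yourself note, gains nothing: with semiconcavity constant $O(1/h)$ the leftover after cancellation is of the same order as the term you started from, and you fall back on the Lipschitz bound anyway; the detour (and the separate handling of $k=1$) should be dropped.

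The genuine difficulty is the converse, and you have correctly located it without resolving it. Your route (linearize at $n=1$ as $h\to0$) differs from the paper's (send $h\to0$, $nh\to t$, pass to \eqref{hypercont} by viscosity stability, then quote the BGL equivalence), but both founder on the same point: the constant does not disappear in the limit. At $n=1$ the factor contributes $\frac1{\l_1}\log(1+C\l_0h)=Ch+O(h^2)$, which survives division by $h$ and leaves only a \emph{defective} inequality $\r\,\mathcal E_{\m}(e^{af})\le\frac{a^2}{2}\int e^{af}|Df|^2d\m+Ca^2\int e^{af}d\m$; along $nh\to t$ the logarithm of the product is $\sum_{k=1}^{n}\l_k^{-1}\log(1+C\l_{k-1}h)\to Ct$, not $0$ (each factor tends to $1$ but there are $t/h$ of them), so the limit is really $\|e^{S_tf}\|_{\l(t)}\le e^{Ct}\|e^f\|_a$. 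A defective logarithmic Sobolev inequality is not equivalent to \eqref{LSI} without an additional tightening step (a Rothaus-type lemma together with a Poincar\'e inequality, or an argument exploiting that $C$ here scales like $\|Df\|_\infty^2$). Your sketch does not supply this step — though, to be fair, neither does the paper, whose assertion that the product tends to $1$ is only true factor by factor.
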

%%%
\begin{proof}
We define
\begin{equation}\label{F}
    F_n=\|e ^{Q_n f}\|_{\l_n}=\left(\int e^{\l_n Q_nf(x)}d\m\right)^{\frac{1}{\l_n}}
\end{equation}
and we prove that
 \begin{equation}\label{hyperconc1}
    F_{n+1}\le F_n(1+C\l_n h )^{\frac{1}{\l_{n+1}}}.
    \end{equation}
Then \eqref{hyperconc2} is obtained iterating over $n$.\\
We consider a measure $\m$ which satisfies \eqref{LSI} and
we start with the identity
\begin{equation}\label{h1}
   F_{n+1}^{\l_{n+1}}-F_n^{\l_{n}}=\int e^{{\l_{n+1}Q_{n+1}f}}d\m-\int e^{{\l_{n}Q_{n}f}}d\m.
\end{equation}
We consider first the term on the right hand side of \eqref{h1}. We have
\begin{equation}\label{h2}
\begin{split}
   & F_{n+1}^{\l_{n+1}}-F_n^{\l_{n}}=F_{n+1}^{\l_{n+1}}-F_{n}^{\l_{n+1}}+F_n^{\l_{n+1}}-F_n^{\l_{n}}=\\
   & e^{\l_{n+1}\ln(F_n)}\left[ e^{\l_{n+1}(\ln(F_{n+1})-\ln(F_n))}-1\right]+e^{\l_n\ln(F_n)}
   \left[e^{(\l_{n+1}-\l_n)\ln(F_n)}-1\right]=\\
   & F_n^{\l_{n+1}}\left[\left(\frac{F_{n+1}}{F_n}\right)^{\l_{n+1}}-1\right]+F_n^{\l_n}[e^{(\l_{n+1}-\l_n)\ln(F_n)}-1].
\end{split}
\end{equation}

We now consider the term on the right hand side of \eqref{h1}
\begin{equation}\label{h3}
\begin{split}
    &\int e^{{\l_{n+1}Q_{n+1}f}}d\m-\int e^{{\l_{n}Q_{n}f}}d\m=\int e^{{\l_{n+1}Q_{n+1}f}}-e^{{\l_{n+1}Q_{n}f}}d\m+\\
    &\int e^{{\l_{n+1}Q_{n}f}}-e^{{\l_{n}Q_{n}f}}d\m=\int e^{{\l_{n+1}Q_{n}f}}
    \left[e^{{\l_{n+1}(Q_{n+1}f-Q_nf)}}-1\right]d\m+\\
    &\int e^{{\l_{n}Q_{n}f}} \left[e^{{(\l_{n+1}-\l_n)Q_nf}}-1\right]d\m.
\end{split}
\end{equation}
By \eqref{h2} and \eqref{h3}, multiplying for $\l_n$, we get
\begin{equation}\label{h4}
    \begin{split}
    & \l_nF_n^{\l_{n+1}}\left[\left(\frac{F_{n+1}}{F_n}\right)^{\l_{n+1}}-1\right]=\\
    & \l_n\int e^{{\l_{n+1}Q_{n}f}}\left[e^{{\l_{n+1}(Q_{n+1}f-Q_nf)}}-1\right]d\m+
    \l_n\int e^{{\l_{n}Q_{n}f}} \left[e^{{h\r Q_nf}}-1\right]d\m-\\
   & \l_n F_n^{\l_n}[e^{h\r\ln(F_n)}-1]=h\r\Ent(e^{\l_n Q_nf})+\l_n\int e^{{\l_{n+1}Q_{n}f}}
    \left[e^{{\l_{n+1}(Q_{n+1}f-Q_nf)}}-1\right]d\m+\\
   &\l_n \int e^{{\l_{n}Q_{n}f}} \left[e^{{h\r Q_nf}}-1-h\r Q_nf\right]d\m
    -\l_nF_n^{\l_n}[e^{h\r\ln(F_n)}-1-h\r\ln(F_n)]\le\\
    &h\l_n^2\int e^{\l_n Q_nf}\frac{|DQ_n f|^2}{2}d\m+\l_n\int e^{{\l_{n+1}Q_{n}f}}
    \left[e^{{\l_{n+1}(Q_{n+1}f-Q_nf)}}-1\right]d\m+\\
   &\l_n \int e^{{\l_{n}Q_{n}f}} \left[e^{{h\r Q_nf}}-1-h\r Q_nf\right]d\m
    -\l_nF_n^{\l_n}[e^{h\r\ln(F_n)}-1-h\r\ln(F_n)].
 \end{split}
\end{equation}
%We observe that we can always assume the $f> 0$. In fact define $\ov f=f+c_0$, where $c_0>\inf f$ and observe that
%$[Q_n \ov f]=[Q_n f]+c_0$, see property 7) in  Prop.\ref{SG}. Moreover
%$\|e^{Q_n\ov f}\|_{\l_n}=e^{c_0}\|e^{Q_n  f}\|_{\l_n}$ for any $n$.
Observing that $\l_n\le \l_{n+1}$ , $|e^{{h\r Q_nf}}-1-h\r Q_nf|\le Ch^2$, $Q_{n+1}f\le Q_n f$ and  the last term  on the right hand side
of \eqref{h4} is negative  we get
\begin{equation*}
    \begin{split}
    & \l_nF_n^{\l_{n+1}}\left[\left(\frac{F_{n+1}}{F_n}\right)^{\l_{n+1}}-1\right]\le
    %h\l_n^2\int e^{\l_{n} Q_nf}\frac{|DQ_nf|^2}{2}d\m+\\
%    &\l_n\int e^{{\l_{n}Q_{n}f}}
%    \left[e^{{\l_{n}(Q_{n+1}f-Q_nf)}}-1\right]d\m+C\l_nF_n^{\l_n}h^2\le
    C \l_n^2h F_n^{\l_n}+C\l_nF_n^{\l_n}h^2\le C\l_n^2 h F_n^{\l_{n+1}}
%
%
%    \\
%    &\l_n\int e^{{\l_{n}Q_{n}f}}
%    h\l_{n}\left[-\sup_q\big\{-\frac{Q_nf(x-hq)-Q_nf(x)}{h}-L(q)\big\}+\frac{|DQ_nf|^2}{2}\right]d\m
%    +\\
%&\l_n\int e^{{\l_{n}Q_{n}f}}\left[e^{h\l_{n}\frac{(Q_{n+1}f-Q_nf)}{h}}-1-h\l_n \frac{Q_{n+1}f-Q_nf}{h} \right]d\m
%    +C\l_n F_n^{\l_n}h^2
 \end{split}
\end{equation*}
%where we used the equation \eqref{HJh}. Now we observe that by the  semi-concavity of $Q_nf$ we get
%\begin{align*}
%&-\sup_q\big\{-\frac{Q_nf(x-hq)-Q_nf(x)}{h}-L(q)\big\}+\frac{|DQ_nf|^2}{2}\\
%&=-\sup_q\big\{-\frac{Q_nf(x-hq)-Q_nf(x)}{h}-L(q)\big\}+\sup_q\big\{q\cdot DQ_nf(x)-L(q)\big\}\\
%&\le\frac{1}{h}\left(  Q_nf(x-h\ov q)-Q_nf(x) -DQ_nf(x)\cdot (-h\ov q)\right)\le C_1h
%\end{align*}
%and by
%\begin{equation*}
%e^{h\l_{n}\frac{(Q_{n+1}f-Q_nf)}{h}}-1-h\l_n \frac{Q_{n+1}f-Q_nf}{h} \le C_2\l_n^2 h^2,
%\end{equation*}
%where  $C_2$ depends only on $\|DQ_nf\|$, to get
%Hence
%\begin{equation}\label{h5}
%   \l_nF_n^{\l_{n+1}}\left[\left(\frac{F_{n+1}}{F_n}\right)^{\l_{n+1}}-1\right]\le
%  \l_n Ch F_n^{\l_n}
%\end{equation}
and therefore
\begin{equation}\label{h6}
   F_{n+1}^{\l_{n+1}}\le F_n^{\l_n}(1+C\l_n h )\le F_n^{\l_{n+1}}(1+C\l_nh)
\end{equation}
which gives \eqref{hyperconc1}.\par
%and therefore
%\[F_{n+1}\le F_n(1+C\l_n^2h^2)^{\frac{1}{\l_{n+1}}}\]
%%
%%
To prove the converse, given $t>0$, let $t_n=nh$ converging to $t$ for $h\to 0$ and $n\to \infty$.
By standard stability results in viscosity solution theory (see \cite{BCD}) $Q_nf$ converges to $S_t f$ uniformly in $x$, where
$S_t$ is the semigroup associated to \eqref{HJ}.
Moreover $\l_n\to\l(t)=a+\r t$ and  $(1+C\l_{k-1} h )^{1/\l_{k}}\to 1$.
Hence by \eqref{hyperconc2}, for $h\to 0$ we get the hypercontractivity of the continuous semigroup
\begin{equation}\label{hypercont}
    \|e^{S_tf(x)}\|_{\l(t)}\le \|e^f\|_a.
\end{equation}
Then the statement follows since it is well known that if the inequality \eqref{hypercont} holds for some $a\neq 0$
and for any smooth function $f$, the measure $\m$ satisfies \eqref{LSI} (see \cite{BGL}).
\end{proof}
\begin{figure}[h!]
\begin{center}
\includegraphics[scale=0.8]{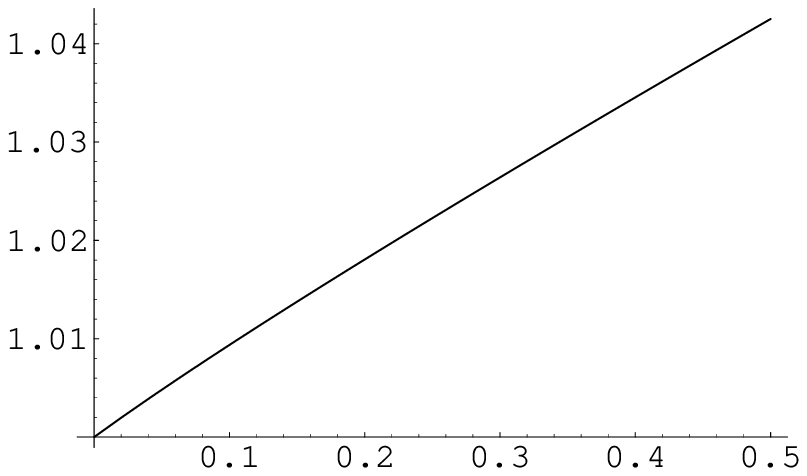} \quad \quad
\end{center}
\caption{Behavior of $f(h)=\prod_{k=1}^{10} (1+C\l_{k-1}h)^{\frac{1}{\l_{k}}}$, fixed $\rho=1$, $C=0.01$.}
\label{fig1}
\end{figure}
%If $h$ goes to zero, the function $f(h)=\prod_{k=1}^{n} (1+C\l_{k}^2h^2)^{\frac{1}{\l_{k}}}$ tends to 1.
%%%%%%%%%%%%%%%%%%%%%%%%%%%%%
            %%%%%%%%%%%%%%
            %%%%%%%%%%%%%%
%%%%%%%%%%%%%%%%%%%%%%%%%%%%%
\section{Hypercontractivity  of the discrete semigroup with respect to the  Lebesgue measure}\label{Section3}
In \cite{gentil03}, it is proved that the semigroup $S_t$ satisfies the following   optimal hyper-contractivity inequality 
with respect to the Lebesgue measure  
\begin{equation}\label{ineq_gentil}
\norm{e^{S_t f} }_{\beta}\leq \tonde{\frac{\alpha}{\beta}}^{\frac{N}{\alpha \beta}\frac{\alpha+\beta}{2}}\norm{e^f}_{\alpha}\tonde{\frac{\beta-\alpha}{t}}^{\frac{N}{2}\frac{\beta-\alpha}{\alpha \beta}}\quadre{\int e^{-L(x)}dx}^{-\frac{\beta-\alpha}{\alpha \beta}} 
\end{equation}
where $\norm{\cdot}_p$ is the $L^p$-norm associated to the Lebesgue measure on $\R^N$.
In thi section we study  hypercontractivity of the discrete semigroup $Q_n$ with respect to  the Lebesgue measure. We assume for simplicity that $H(p)=\half |p|^2$ and therefore $L(q)=\half |q|^2$.\par 
\begin{thm}\label{Leb}
For any smooth function $f \in \R^N$, $ 0<\alpha\leq\beta$, we have
\begin{equation}\label{ineq_PL}
%\norm{e^{Q_n f}}_{L^{\beta}}\leq \tonde{\frac{\alpha}{\beta}}^{\frac{N}{\alpha}}\norm{e^f}_{L^{\alpha}} \tonde{\int e^{-\frac{\beta-\alpha}{\alpha nh} \beta h \sum \frac{x_k^2}{2}}dx}^{-\frac{\beta-\alpha}{\alpha \beta}},
\norm{e^{Q_n f}}_{\beta}\leq\big( {\frac{\alpha}{\beta}}\big)^{\frac{N}{\alpha \beta}\frac{\alpha+\beta}{2}}\norm{e^f}_{\alpha}\tonde{\frac{\beta-\alpha}{nh}}^{\frac{N}{2}\frac{\beta-\alpha}{\alpha \beta}}\tonde{2\pi}^{-\frac{N}{2}\frac{\beta-\alpha}{\alpha \beta}}.
\end{equation}
Inequality \eqref{ineq_PL} is optimal and equality holds if for some $0<\alpha \leq \beta$, $\overline x \in \R^N$ and $b>0$ we have
\begin{equation}\label{ineq PLb}
f(x)=-bL(x -\overline x),
\end{equation}
and
\begin{equation}\label{ineq PLc}
nh=\frac{\beta-\alpha (nh)^{-\frac{\beta-\alpha}{\alpha}}}{b\beta}.
\end{equation}
Moreover, we obtain the following ultracontractive bound
\begin{equation}\label{ultracontr}
\norm{e^{Q_n f}}_{\infty}\leq \norm{e^f}_1\tonde{\frac{1}{nh}}^{\frac{N}{2}}\tonde{2\pi}^{-\frac{N}{2}},
\end{equation}
and the equality holds if $nh=\frac{1}{b}$.
\end{thm}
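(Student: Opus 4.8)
The plan is to reduce everything to Gentil's optimal inequality \eqref{ineq_gentil} via the identity $Q_nf=S_{nh}f$ already pointed out in the Remark after Proposition \ref{SG}. First I would establish this identity cleanly: writing $z=\sum_{k=0}^{n-1}hq_k$ in the definition \eqref{SGh} and using the convexity of $L(q)=|q|^2/2$, Jensen's inequality gives $\sum_{k=0}^{n-1}hL(q_k)\ge nh\,L\big(z/(nh)\big)$, hence $(Q_nf)(x)\ge\inf_{z\in\R^N}\{f(x-z)+nh\,L(z/(nh))\}=(S_{nh}f)(x)$ by the Hopf--Lax formula \eqref{HL}; the reverse inequality follows by choosing $q_0=\dots=q_{n-1}=z/(nh)$, for which the bracket in \eqref{SGh} equals exactly $f(x-z)+nh\,L(z/(nh))$. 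Thus $Q_nf=S_{nh}f$, i.e.\ the scheme is exact for the quadratic Hamiltonian.

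Next I would specialise \eqref{ineq_gentil} at $t=nh$ and evaluate the normalising constant: since $L(x)=|x|^2/2$, $\int_{\R^N}e^{-L(x)}\,dx=(2\pi)^{N/2}$, so $\big[\int e^{-L}\big]^{-(\beta-\alpha)/(\alpha\beta)}=(2\pi)^{-\frac{N}{2}\frac{\beta-\alpha}{\alpha\beta}}$ and \eqref{ineq_PL} follows at once. Optimality of \eqref{ineq_PL} and the equality case \eqref{ineq PLb}--\eqref{ineq PLc} are inherited from the sharpness statement for $S_t$ in \cite{gentil03}: the extremisers are the paraboloids $f=-bL(\cdot-\overline x)$, and the relation linking the time to the parameter $b$ at which equality holds, evaluated at $t=nh$, is exactly \eqref{ineq PLc}.

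Finally, for the ultracontractive bound \eqref{ultracontr} I would let $\alpha=1$ and $\beta\to+\infty$ in \eqref{ineq_PL}. Here one has $\frac{N}{\alpha\beta}\frac{\alpha+\beta}{2}\to\frac{N}{2}$ and $\frac{N}{2}\frac{\beta-\alpha}{\alpha\beta}\to\frac{N}{2}$, and --- the point needing care --- the two $\beta$-dependent prefactors, each degenerating on its own, must be grouped before taking the limit: $\big(\tfrac1\beta\big)^{\frac{N(1+\beta)}{2\beta}}\big(\tfrac{\beta-1}{nh}\big)^{\frac{N(\beta-1)}{2\beta}}\to(nh)^{-N/2}$, while $(2\pi)^{-\frac{N}{2}\frac{\beta-\alpha}{\alpha\beta}}\to(2\pi)^{-N/2}$, $\|e^f\|_\alpha\to\|e^f\|_1$ and $\|e^{Q_nf}\|_\beta\to\|e^{Q_nf}\|_\infty$; this gives \eqref{ultracontr}, and passing to the limit in \eqref{ineq PLc} with $\alpha=1$ forces $b\,nh\to1$, i.e.\ $nh=1/b$.

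The proof is thus essentially a corollary of Gentil's theorem; the only two genuine steps are (i) the identification $Q_nf=S_{nh}f$, for which Jensen's inequality is the right tool, and (ii) the bookkeeping in the limit $\beta\to+\infty$, where a naive computation would see the constants blow up or vanish. I expect (ii) to be the only place where an error could creep in.
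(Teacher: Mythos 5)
Your proof is correct but follows a genuinely different route from the paper. You reduce everything to the continuous semigroup via the identity $Q_nf=S_{nh}f$ (which indeed holds here: one step of \eqref{HJhb} is exactly $S_h$ applied to $u_n$, and your Jensen argument on the convex $L(q)=|q|^2/2$ is a correct direct verification; equality is attained at $q_0=\dots=q_{n-1}=z/(nh)$), and then you quote Gentil's optimal inequality \eqref{ineq_gentil} at $t=nh$ together with $\int e^{-L}=(2\pi)^{N/2}$. The paper instead re-runs the Pr\'ekopa--Leindler argument directly on the discrete representation \eqref{SGh}, choosing all the $q_k$ equal to $-\frac{\beta-\alpha}{\alpha nh}y$ --- which is morally the same observation as your Jensen step, but packaged inside the proof rather than as a prior identification with $S_{nh}$. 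What the paper's self-contained version buys is that the same Pr\'ekopa--Leindler mechanism is then reused one step at a time ($Q_n$ versus $Q_{n-1}$) to obtain the recursive estimate \eqref{const_Leb} of the following proposition, which does not follow from citing \cite{gentil03} alone. Your limit computation for \eqref{ultracontr} ($\alpha=1$, $\beta\to\infty$, grouping the two degenerating prefactors so that their product tends to $(nh)^{-N/2}$) is right and matches the paper's. The one place where you are thinner than you should be is the equality case: \eqref{ineq PLc} is a specific algebraic relation in $nh$, $b$, $\alpha$, $\beta$, and "inherited from the sharpness statement in \cite{gentil03}" does not by itself produce it; to close this you need to do what the paper does, namely compute $Q_nf=S_{nh}f=-\frac{b}{1-nhb}L(\cdot-\overline x)$ for $f=-bL(\cdot-\overline x)$, evaluate $\norm{e^{Q_nf}}_\beta=\bigl[\tfrac{2\pi(1-nhb)}{b\beta}\bigr]^{N/(2\beta)}$ and $\norm{e^f}_\alpha=\bigl(\tfrac{2\pi}{\alpha b}\bigr)^{N/(2\alpha)}$, and solve for when the two sides of \eqref{ineq_PL} coincide. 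That is a routine Gaussian computation, so the gap is minor, but as written your argument asserts rather than proves \eqref{ineq PLc}.
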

\begin{proof}
In order to prove inequality \eqref{ineq_PL}, we are going to use the following Pr\'ekopa-Leindler inequality:\\
let $a, b >0$  such that $a+b=1$, and $u, v, w$ three non-negative functions on $\R^N$; suppose that  for any $x, y \in \R^N$ we have
\begin{equation}\label{hypPL}
u(x)^a v(y)^b\leq w(ax+by).
\end{equation}
Then the following inequality holds
$$\tonde{\int_{\R^N}u(x)dx}^{a}\tonde{\int_{\R^N}v(y)dy}^{b}\leq \int_{\R^N}w(x)dx.$$
Let $\alpha, \beta \in \R$ be such that $0<\alpha\leq \beta$ and set $\theta=\frac{\beta-\alpha}{\alpha nh} \beta$. For any $x \in \R^N$  we define
\begin{eqnarray*}
u(x)& = & e^{\beta Q_n f(x)},\\
v(x)& = & e^{-\theta   \frac{|x |^2}{2}},\\
w(x) & = & e^{\alpha f\tonde{\frac{\beta}{\alpha}x}}.
\end{eqnarray*}
%\begin{rem}Observe that if $nh$ is small
%$$\beta (nh) =\frac \alpha 2 x+ \frac \alpha2 \sqrt {1+\frac {4\theta (nh)} \alpha} \approx \alpha +\theta nh$$
%\end{rem}
We prove that $u$, $v$ and $w$ verify the hypothesis of the Pr\'ekopa-Leindler inequality \eqref{hypPL} with
\begin{equation}\label{Param}
 a=\frac{\alpha}{\beta}, \quad \quad b=\frac{\beta-\alpha}{\beta}.
\end{equation}
 By \eqref{SGh}  for any $\{q_1,\dots,q_n\}\subset \R^N$
\begin{equation*}
u(x)^a =e^{a \beta Q_n f(x)}\le e^{\alpha f(x-h\sum_{k=0}^{n-1} q_k)+\alpha h  \sum_{k=0}^{n-1} \frac{|q_k|^2}{2}},
\end{equation*}
hence
\begin{equation*}
 u(x)^a v(y)^b\leq e^{\alpha f(x-h\sum_{k=0}^{n-1} q_k)+\alpha h \sum_{k=0}^{n-1}  \frac{|q_k|^2}{2}-\theta \frac{\beta-\alpha}{2 \beta} |y|^2}.
\end{equation*}
If in particular we choose $q_0=q_1= \ldots=q_{n-1}=q$ in such a way that
\begin{equation*}
x-h\sum_{k=0}^{n-1}  q_k =x+\frac{\beta-\alpha}{\alpha}y
\end{equation*}
then we get
%\Rightarrow h\sum \tilde{q_k}= -\frac{\beta-\alpha}{\alpha}y
%\end{equation*}
\begin{equation*}
 q =-\frac{\beta-\alpha}{\alpha nh}y
\end{equation*}
and therefore
\begin{align*}
 u(x)^a v(y)^b\leq e^{\alpha f(x-h\sum_{k=0}^{n-1} q_k)+\alpha \frac{h}{2}\sum_{k=0}^{n-1} \tonde{-\frac{\beta-\alpha}{\alpha nh}}^2 |y |^2-\frac{(\beta-\alpha)^2}{2 \alpha nh}\sum_{k=0}^{n-1} |y_k|^2}\\
=e^{\alpha f(x+\frac{\beta-\alpha}{\alpha}y)}=e^{\alpha f\quadre{\frac{\beta}{\alpha}(\frac{\alpha}{\beta}x+\frac{\beta-\alpha}{\beta}y)}}=e^{\alpha f(\frac{\beta}{\alpha}(ax+by))}=w(ax+by).
\end{align*}
Hence we  can apply the Pr\'ekopa-Leindler inequality to obtain
\begin{eqnarray*}
\tonde{\int e^{\beta Q_n f(x)} dx}^a \tonde{\int e^{-\theta   \frac{|x|^2}{2}}dx}^b\leq \int e^{\alpha f\tonde{\frac{\beta}{\alpha}x}} dx\\
%\Rightarrow \tonde{\int e^{\beta Q_n f(x)} dx}^{\frac{\alpha}{\beta}}\tonde{\int e^{-\theta  \sum \frac{|x_k|^2}{2}}dx}^{\frac{\beta-\alpha}{\beta}}\leq  \int e^{\alpha f\tonde{\frac{\beta}{\alpha}x}} dx\\
%\Rightarrow \tonde{\int e^{\beta Q_n f(x)} dx}^{\frac{\alpha}{\beta}}\leq \tonde{\int e^{\alpha f\tonde{\frac{\beta}{\alpha}x}} dx} \tonde{\int e^{-\theta  \sum \frac{|x_k|^2}{2}}dx}^{-\frac{\beta-\alpha}{\beta}} \\
\Rightarrow \tonde{\int e^{\beta Q_n f(x)} dx}^{\frac{1}{\beta}}\leq \tonde{\int e^{\alpha f\tonde{\frac{\beta}{\alpha}x}}dx}^{\frac{1}{\alpha}} \tonde{\int e^{-\theta    \frac{|x|^2}{2}}dx}^{-\frac{\beta-\alpha}{\alpha \beta}}.
\end{eqnarray*}
It follows that
\begin{equation}\label{ineq1}
\norm{e^{Q_n f}}_{L^{\beta}}\leq \tonde{\frac{\alpha}{\beta}}^{\frac{N}{\alpha}}\norm{e^f}_{ \alpha} \tonde{\int e^{-\theta   \frac{|x|^2}{2}}dx}^{-\frac{\beta-\alpha}{\alpha \beta}}.
\end{equation}
We compute
\begin{eqnarray*}
\tonde{\int e^{-\theta      \frac{|x|^2}{2}}dx}^{-\frac{\beta-\alpha}{\alpha \beta}} =  \tonde{\int e^{- \left|\tonde{\frac{\theta }{2}}^{\frac12}  x \right|^2} dx}^{-\frac{\beta-\alpha}{\alpha \beta}}
=\tonde{\int e^{-|y|^2} dy\tonde{\frac{\theta }{2}}^{-\frac{N}{2}}}^{-\frac{\beta-\alpha}{\alpha \beta}}
\\=\quadre{\tonde{\frac{\theta }{2}}^{-\frac{N}{2}} \pi^{\frac{N}{2}}}^{-\frac{\beta-\alpha}{\alpha \beta}}
=\tonde{\frac{2 \pi}{\theta }}^{-\frac{N}{2}\frac{\beta-\alpha}{\alpha \beta}}
=\quadre{\frac{2 \pi \alpha nh}{\beta (\beta-\alpha)}}^{-\frac{N}{2}\frac{\beta-\alpha}{\alpha \beta}}.
\end{eqnarray*}
Substituting in \eqref{ineq1}   we get \eqref{ineq_PL}.\\
%%%%%%%%%%%%%%%%%%%%%%%%%%%%%%%
In order to prove the optimality, we compute the terms $\norm{e^{Q_n f}}_{ \beta}$ and $\norm{e^f}_{ \alpha}$ appearing in \eqref{ineq_PL} for $f$ as in \eqref{ineq PLb}. We obtain
\begin{eqnarray*}
\norm{e^{Q_n f}}_{ \beta}  =  \tonde{\int e^{\beta Q_n f(x)} dx}^{\frac{1}{\beta}}=\tonde{\int e^{-\frac{b \beta}{1-nhb}L(x-\overline{x})} dx}^{\frac{1}{\beta}}\\
                                                     =  \tonde{\int e^{-\frac{b \beta}{1-nhb}H(z) }dz}^{\frac{1}{\beta}}
                                                     =  \tonde{\int e^{-\frac{b \beta}{1-nhb}\frac{z^2}{2}} dz}^{\frac{1}{\beta}}\\
                                                     =  \quadre{\frac{2 \pi (1-nhb)}{b \beta}}^{\frac{N}{2 \beta}}
\end{eqnarray*}
and
\begin{eqnarray*}
\norm{e^f}_{ \alpha}  =  \int \tonde{e^{-\alpha b L(x-\overline{x})} dx}^{\frac{1}{\alpha}}\\
                                             =  \int \tonde{e^{-\alpha b \frac{(x-\overline{x})^2}{2}} dx}^{\frac{1}{\alpha}}
                                             =  \int\tonde{e^{-\alpha b \frac{z^2}{2}} dz}^{\frac{1}{\alpha}}
                                             =  \tonde{\frac{2 \pi}{\alpha b}}^{\frac{N}{2 \alpha}}.
\end{eqnarray*}
%Now, we choose $t_n=\frac{\beta-\alpha (nh)^{-\frac{\beta-\alpha}{\alpha}}}{b\beta}$. Therefore
%$$1-t_n b=\frac{\alpha}{\beta}\tonde{\frac{1}{n}}^{\frac{\beta-\alpha}{\alpha}}$$
%$$\Rightarrow \frac{1-t_n b}{b\beta}=\frac{\alpha}{\beta(\beta-\alpha)}\tonde{\frac{1}{n}}^{\frac{\beta-\alpha}{\alpha}}$$
%$$\Rightarrow \quadre{\frac{2 \pi (1-t_n b)}{b \beta}}^{\frac{N}{2 \beta}}= \tonde{\frac{\alpha}{\beta}}^{\frac{N}{\alpha}} \tonde{\frac{2 \pi}{\alpha b}}^{\frac{N}{2 \alpha}} \quadre{\frac{2 \pi \alpha n}{\beta h (\beta-\alpha)}}^{-\frac{N}{2}\frac{\beta-\alpha}{\alpha \beta}}$$
%\begin{equation*}
 %\quadre{\frac{2 \pi (1-t_nb)}{b \beta}}^{\frac{N}{2 \beta}}= \quadre{2\pi \frac{1-\frac{\beta-\alpha n^{-\frac{\beta-\alpha}{\alpha}}}{b\beta}}{b \beta}}^{\frac{N}{2\beta}}
%\end{equation*}
%\begin{equation*}
%=\quadre{\frac{2\pi}{b\beta}\frac{\alpha}{\beta}n^{-\frac{\beta-\alpha}{\alpha}}}^{\frac{N}{2\beta}}=\quadre{\frac{2\pi \alpha}{\beta(\beta-\alpha)}}^{\frac{N}{2\beta}}n^{-\frac{N}{2}\frac{\beta-\alpha}{\alpha \beta}}.
%\end{equation*}
%Moreover
%$$\tonde{\frac{\alpha}{\beta}}^{\frac{N}{\alpha}}\tonde{\frac{2\pi\beta}{\alpha(\beta-\alpha)}}^{\frac{N}{\alpha}}\quadre{\frac{2\pi \alpha n}{\beta  (\beta-\alpha)}}^{-\frac{N}{2}\frac{\beta-\alpha}{\alpha \beta}}$$
%$$=\quadre{\frac{2\pi \alpha}{\beta(\beta-\alpha)}}^{\frac{N}{2\beta}}n^{-\frac{N}{2}\frac{\beta-\alpha}{\alpha \beta}}.$$
and we obtain an equality in \eqref{ineq_PL} for
$$1-nhb=\frac{\alpha}{\beta}\tonde{\frac{1}{nh}}^{\frac{\beta-\alpha}{\alpha}},$$
i.e. \eqref{ineq PLc}.\\
%(DA RIVEDERE)
The ultracontractive bound \eqref{ultracontr} is obtained for  $\beta\to+\infty$ and $\alpha=1$ in \eqref{ineq_PL}. Furthermore, if
$$\norm{e^{Q_n f}}_{ \infty}=1, \quad \norm{e^f}_{1}=\tonde{\frac{2\pi}{b}}^{\frac{N}{2}},$$
  the equality in \eqref{ultracontr} is obtained for $nh=\frac{1}{b}$.
\end{proof}
%$$(2 \pi)^{\frac{N}{2\alpha}-\frac{N}{2}\frac{\beta-\alpha}{\alpha \beta}}=(2\pi)^{\frac{N}{2\beta}};$$
%$$\alpha^{\frac{N}{\alpha}-\frac{N}{2\alpha}-\frac{N}{2}\frac{\beta-\alpha}{\alpha \beta}}=\alpha^{\frac{N}{2\beta}};$$
%$$\tonde{\frac{1}{\beta}}^{\frac{N}{2\alpha}-\frac{N}{2}\frac{\beta-\alpha}{\alpha \beta}}=\tonde{\frac{1}{\beta}}^{\frac{N}{2\beta}};$$
%$$\tonde{\frac{1}{\beta-\alpha}}^{\frac{N}{2\alpha}-\frac{N}{2}\frac{\beta-\alpha}{\alpha \beta}}=\tonde{\frac{1}{\beta-\alpha}}^{\frac{N}{2\beta}};$$
%$$n^{-\frac{N}{2}\frac{\beta-\alpha}{\alpha\beta}}.$$
%$$\Rightarrow \quadre{\frac{(1-nhb)}{b \beta}}^{\frac{N}{2 \beta}}\stackrel{?}{=}\alpha^{\frac{N}{2\beta}} \tonde{\frac{1}{\beta}}^{\frac{N}{2\beta}} \tonde{\frac{1}{\beta-\alpha}}^{\frac{N}{2\beta}} \tonde{\frac{1}{n}}^{\frac{N}{2\beta}\frac{\beta-\alpha}{\alpha}}.$$
%%%%%%%%%%%%%%%%
           %            %
%%%%%%%%%%%%%%%%
\begin{rem}
Consider the constant appearing in
\eqref{ineq_PL}, that is
%\norm{e^{Q_n f}}_{L^{\beta}}\leq \tonde{\frac{\alpha}{\beta}}^{\frac{N}{\alpha}}\norm{e^f}_{L^{\alpha}} \tonde{\int e^{-\frac{\beta-\alpha}{\alpha nh} \beta h \sum \frac{x_k^2}{2}}dx}^{-\frac{\beta-\alpha}{\alpha \beta}},
\begin{equation}\label{constant}
C= \big( {\frac{\alpha}{\beta}}\big)^{\frac{N}{\alpha \beta}\frac{\alpha+\beta}{2}}\tonde{\frac{\beta-\alpha}{nh}}^{\frac{N}{2}\frac{\beta-\alpha}{\alpha \beta}}\tonde{2\pi}^{-\frac{N}{2}\frac{\beta-\alpha}{\alpha \beta}}.
\end{equation}
We observe that for fixed values of  $\alpha=\frac \beta 2, nh=1$, we have
%NH
$$C=
%\bigg( {\frac{1}{2}}\bigg)^{\frac{3N}{2} }\bigg( {\frac{\beta}{2}}\bigg)^{\frac{N}{2\beta}}
(\frac12)^{\frac{3N}{2\beta}}(\frac{\beta}{4\pi})^{\frac{N}{2\beta}}.$$
\begin{figure}[h!]\label{fig2}
\begin{center}
\includegraphics[scale=0.8]{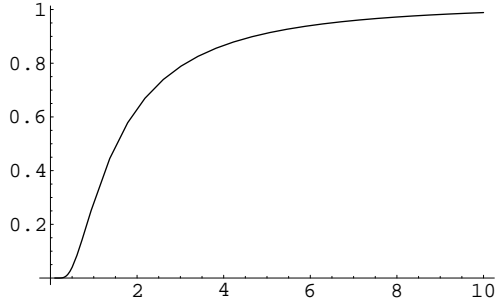} \quad \quad
\end{center}
\caption{Behaviour of the constant  \eqref{constant} as a function of $\beta$, fixed  $\alpha=\frac \beta 2$, $N=1$.}
\end{figure}

%grafico
%  N=1, beta tra 1 e 10%
In this case the graph of the constant coincides with the one of the constant  in \eqref{ineq_gentil}  for $t=nh$ (see Fig. \ref{fig2}).
%Indeed solving
%the LP equation this gives $nh=1$, not depending on the value of $\beta$ as  $\alpha=\frac \beta 2$,
%the same optimal  value obtained in G in the continuous case.
 \end{rem}
 %%%%%%%%%%%%
 In the next proposition we give a  hypercontractivity estimate with respect to the Lebesgue
 measure for the discrete semigroup $Q_n$  similar to one of Theorem \ref{stimeF}.
\begin{prop}
For any $f$ Lipschitz continuous and   for any $n$, any $h>0$, any increasing sequence $\{\beta_k\}_{k\in \N}$ with $\beta_0>0$, we have
\begin{equation}\label{const_Leb}
\norm{e^{Q_n f}}_{\beta_n}\leq \norm{e^f}_{ \beta_0}\prod_{k=1}^n \tonde{\frac{\beta_{k-1}}{\beta_k}}^{\frac{N}{\beta_{k-1}}}\tonde{\frac{2 \pi \beta_{k-1}}{\beta_k-\beta_{k-1}}\beta_k h}^{-\frac{\beta_k - \beta_{k-1}}{2 \beta_k \beta_{k-1}}}.
\end{equation}
\end{prop}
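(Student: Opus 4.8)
The plan is to obtain \eqref{const_Leb} by composing $n$ copies of the one-step version of Theorem \ref{Leb}, chained together through the semigroup identity of Proposition \ref{SG}. First I would isolate the one-step estimate. Running the argument in the proof of Theorem \ref{Leb} with $n$ replaced by $1$ --- that is, using the pointwise inequality \eqref{SGh} for $Q_1$ together with the Pr\'ekopa-Leindler inequality applied to $u=e^{\delta Q_1 g}$, $v=e^{-\theta|x|^2/2}$ and $w=e^{\gamma g(\frac{\delta}{\gamma}x)}$ with $\theta=\frac{(\delta-\gamma)\delta}{\gamma h}$ --- one gets, for every $0<\gamma\le\delta$ and every (integrable) $g$, the analogue of \eqref{ineq1}
\[
\norm{e^{Q_1 g}}_{\delta}\le\Big(\frac{\gamma}{\delta}\Big)^{\frac{N}{\gamma}}\norm{e^{g}}_{\gamma}\,\Big(\frac{2\pi\gamma h}{\delta(\delta-\gamma)}\Big)^{-\frac{N}{2}\frac{\delta-\gamma}{\gamma\delta}}.
\]
The point worth stressing here is that this derivation uses only the pointwise inequality \eqref{SGh} and the measurability/integrability of $g$; no smoothness is needed, so the estimate is legitimate with $g=Q_{k-1}f$ for $f$ merely Lipschitz continuous.

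Next I would iterate. By part 1) of Proposition \ref{SG} we have $Q_k f=Q_1(Q_{k-1}f)$, so applying the one-step estimate with $g=Q_{k-1}f$, $\gamma=\beta_{k-1}$ and $\delta=\beta_k$ (admissible because $\{\beta_k\}$ is increasing with $\beta_0>0$) gives, for each $k=1,\dots,n$,
\[
\norm{e^{Q_k f}}_{\beta_k}\le\Big(\frac{\beta_{k-1}}{\beta_k}\Big)^{\frac{N}{\beta_{k-1}}}\norm{e^{Q_{k-1}f}}_{\beta_{k-1}}\,\Big(\frac{2\pi\beta_{k-1}h}{\beta_k(\beta_k-\beta_{k-1})}\Big)^{-\frac{N}{2}\frac{\beta_k-\beta_{k-1}}{\beta_{k-1}\beta_k}}.
\]
Multiplying these $n$ inequalities and using $Q_0f=f$, the intermediate norms $\norm{e^{Q_{k-1}f}}_{\beta_{k-1}}$ ($k=2,\dots,n$) cancel against the left-hand sides of the preceding steps, leaving $\norm{e^{Q_n f}}_{\beta_n}\le\norm{e^f}_{\beta_0}$ times the product over $k$ of the one-step constants, which is \eqref{const_Leb}.

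I do not expect a real obstacle: once the one-step estimate is isolated, the rest is bookkeeping. The two points that deserve a line of justification are (i) the admissibility of applying the one-step case of Theorem \ref{Leb} to the non-smooth iterate $Q_{k-1}f$, addressed above by the observation that the Pr\'ekopa-Leindler step needs no regularity, and (ii) finiteness of the $L^p$-norms involved, which may be taken for granted whenever the right-hand side of \eqref{const_Leb} is finite --- the inequality being vacuous otherwise --- or secured by first treating $f$ with enough decay at infinity (such as the extremals $f=-bL(x-\overline x)$ of Theorem \ref{Leb}) and then passing to general Lipschitz $f$ by approximation.
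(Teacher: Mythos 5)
Your proposal is correct and follows essentially the same route as the paper: the paper likewise applies the Pr\'ekopa--Leindler inequality to the single step $Q_nf=Q_1(Q_{n-1}f)$ via \eqref{HJhb}, obtaining the one-step recursive bound \eqref{recursive} with $\alpha=\beta_{k-1}$, $\beta=\beta_k$, and then telescopes over $k$ exactly as you do. If anything your write-up is slightly cleaner, since the paper's intermediate estimate \eqref{recursive} carries over the factor $nh$ from the proof of Theorem \ref{Leb} where the single time increment $h$ should appear, a point your isolation of the one-step estimate makes explicit.
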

\begin{proof}
As in the proof of Theorem  \ref{Leb}, we set
$$u(x)=e^{\beta Q_n f(x)}, \quad v(x)=e^{-\theta\frac{|x|^2}{2}}, \quad w(x)=e^{\alpha Q_{n-1}f\tonde{\frac{\beta}{\alpha}x}}$$
and we prove that $u, v$ and $w$ verify the hypothesis of the Pr\'ekopa-Leindler inequality, with $a$ and $b$ as in \eqref{Param}.
%We have
%$$u(x)^a=e^{a \beta Q_{n}f(x)}=e^{a \beta \inf_y\graffe{Q_{n-1}f(y)+hL\tonde{\frac{x-y}{h}}}}\leq e^{a\beta \tonde{Q_{n-1}f(y)+hL\tonde{\frac{x-y}{h}}}}.$$
%With the change of variable $y=x-h q_{n-1}$, we have
By \eqref{HJhb}
$$u(x)^a v(y)^b\leq e^{\alpha Q_{n-1}f(x-hq )+h \frac{q^2}{2}-\frac{\theta}{2}\frac{y^2}{2}}.$$
Choosing $q =-\frac{\beta-\alpha}{\alpha h}y$ and $\theta=-\frac{\beta-\alpha}{\alpha h}\beta$, we obtain
$$u(x)^a v(y)^b\leq e^{\alpha Q_{n-1}f\tonde{x+\frac{\beta-\alpha}{\alpha}y}}=e^{\alpha Q_{n-1}f\tonde{\frac{\beta}{\alpha}(ax+by)}}=w(x).$$
Hence we can apply the  the Pr\'ekopa-Leindler inequality and arguing as for estimate \eqref{ineq1},  we obtain
\begin{equation}\label{recursive}
\begin{split}
 \norm{e^{Q_n f}}_{ \beta}\leq \tonde{\frac{\alpha}{\beta}}^{\frac{N}{\alpha}}\norm{e^{Q_{n-1}f}}_{\alpha}\tonde{\int e^{-\theta \frac{x^2}{2}}}^{-\frac{\beta-\alpha}{\alpha \beta}}=\\
 \tonde{\frac{\alpha}{\beta}}^{\frac{N}{\alpha}}\norm{e^{Q_{n-1}f}}_{\alpha}\quadre{\frac{2 \pi nh \alpha }{\beta  (\beta - \alpha)}}^{-\frac{N}{2}\frac{\beta -\alpha}{\alpha \beta}}
 \end{split}
\end{equation}
For $\beta=\beta_n, \alpha=\beta_{n-1}$ in \eqref{recursive}, we get
\[\norm{e^{Q_n f}}_{\beta}\leq \tonde{\frac{\beta_{n-1}}{\beta_n}}^{\frac{N}{\beta_{n-1}}}\norm{e^{Q_{n-1}f}}_{ {\beta_{n-1}}}\quadre{\frac{2 \pi nh \beta_{n-1}}{\beta_n (\beta_n-\beta_{n-1})}}^{-\frac{N}{2}\frac{\beta_n-\beta_{n-1}}{\beta_{n-1} \beta_n}} .\]
%Applying \eqref{recursive} to the semigroup $e^{Q_n f}$ with the choise
%$\beta=\beta_n, \alpha=\beta_{n-1}$, we have
%\begin{equation*}
%\norm{e^{Q_n f}}_{L^{\beta_n}}\leq \tonde{\frac{\beta_{n-1}}{\beta_n}}^{\frac{N}{\beta_{n-1}}}\norm{e^{Q_{n-1}f}}_{L^{\beta_{n-1}}}\tonde{\int e^{-\theta \frac{x_{n-1}^2}{2}}}^{-\frac{\beta_n-\beta_{n-1}}{\beta_n\beta_{n-1}}}.
%\end{equation*}
%If  we apply \eqref{recursive} to the semigroup $e^{Q_{n-1} f}$ with   $\beta=\beta_{n-1}, \alpha=\beta_{n-2}$,  we get
%\begin{equation*}
%\norm{e^{Q_{n-1} f}}_{ \beta_{n-1}}\leq \tonde{\frac{\beta_{n-2}}{\beta_{n-1}}}^{\frac{N}{\beta_{n-2}}}\norm{e^{Q_{n-2}f}}_{ \beta_{n-2}}\tonde{\int e^{-\theta \frac{x_{n-2}^2}{2}}}^{-\frac{\beta_{n-1}-\beta_{n-2}}{\beta_{n-1}\beta_{n-2}}}.
%\end{equation*}
%Going further, up to applying \eqref{recursive} to the semigroup $e^{Q_0 f}=e^f$ with the choise $\beta=\beta_{1}, \alpha=\beta_{0}$, we obtain
Iterating the previous argument for $n-1, n-2,\dots, 0$ we finally get the hypercontractivity   estimate \eqref{const_Leb} for $Q_n$.
%\begin{equation}\label{const_Leb}
%\norm{e^{Q_n f}}_{\beta_n}\leq \norm{e^f}_{ \beta_0}\prod_{k=1}^n \tonde{\frac{\beta_{k-1}}{\beta_k}}^{\frac{N}{\beta_{k-1}}}\tonde{\frac{2 \pi \beta_{k-1}}{\beta_k-\beta_{k-1}}\beta_k h}^{-\frac{\beta_k - \beta_{k-1}}{2 \beta_k \beta_{k-1}}}.
%\end{equation}
\end{proof}
\begin{rem}
In particular, if we  set
$$\beta_k=\beta_0+\rho k h, \quad \beta_{k-1}=\beta_0+\rho(k-1)h,$$
in \eqref{const_Leb} we have
$$\lim_{h\to 0}\tonde{\frac{\beta_{k-1}}{\beta_k}}^{\frac{N}{\beta_{k-1}}}=\lim_{h\to 0}\tonde{\frac{\beta_0+\rho(k-1)h}{\beta_0+\rho k h}}^{\frac{N}{\beta_0+\rho(k-1)h}}=1$$
and
\begin{align*}
&\lim_{h\to 0}\tonde{\frac{2 \pi \beta_{k-1}}{\beta_k-\beta_{k-1}}\beta_k h}^{-\frac{\beta_k - \beta_{k-1}}{2 \beta_k \beta_{k-1}}}=\\
& \lim_{h\to 0} \tonde{\frac{2 \pi(\beta_0+\rho(k-1)h)}{\rho h}(\beta_0+\rho k h)h}^{-\frac{\rho h}{2(\beta_0+\rho h)(\beta_0+\rho(k-1)h)}}=1.
\end{align*}
Comparing  the graph in  Fig.\ref{fig3}   with the graph in  Fig.\ref{fig1} we see that the constant in \eqref{const_Leb} converges to $1$
by values lower  than 1, whereas  the constant in \eqref{hyperconc2} by values greater than $1$.
\end{rem}
%
%$$\lim_{h\to 0}\quadre{\frac{2 \pi}{\rho}(\beta_0^2+\beta_0\rho kh+\beta_0 \rho(k-1)h+\rho^2 k(k-1)h^2)}^{-\frac{\rho h}{2((\beta_0^2+\beta_0\rho kh+\beta_0 \rho(k-1)h+\rho^2 k(k-1)h^2))}}=$$
%$$\tonde{\frac{2 \pi \beta_0^2}{\rho}}^0=1.$$
%Also in the case of Lebesgue measure we have the hypercontractivity of the continuous semigroup \eqref{hypercont}.
\begin{figure}[h!]
\begin{center}
\includegraphics[scale=0.8]{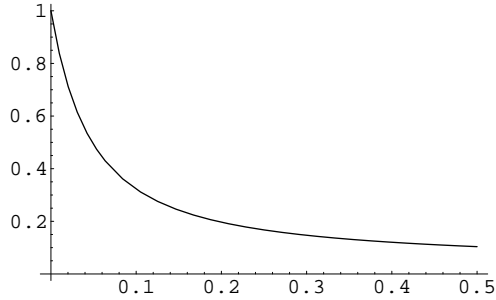} \quad \quad
\end{center}
\caption{Behavior of  $f(h)= \prod_{k=1}^{10} \tonde{\frac{\beta_{k-1}}{\beta_k}}^{\frac{N}{\beta_{k-1}}}\tonde{\frac{2 \pi \beta_{k-1}}{\beta_k-\beta_{k-1}}\beta_k h}^{-\frac{\beta_k - \beta_{k-1}}{2 \beta_k \beta_{k-1}}}$ for  $\rho=\beta_0=N=1$.}\label{fig3}
\end{figure}

\section{A concentration estimate  for the approximation error}\label{Section4}
It is well known  that for $h\to 0$, the discrete solution computed via the scheme \eqref{HJh}
converges uniformly to the solution of \eqref{HJ} with an error $\|u-u_h\|_\infty$   of order $h^{1/2}$ (see \cite{CL}, \cite{FF}).
In this section we obtain an   estimate of the measure of the set  where the error is concentrated.\par
To simplify the notation we write  $S_n f$ for $S_{nh}f$, where $S_t$ is the continuous semigroup associated to the equation \eqref{HJ}.
%%%%%%%%%
\begin{thm}\label{stimeErr}
If $\m$ is absolutely continuous with respect to the Lebesgue measure and satisfies the  logarithmic Sobolev inequality \eqref{LSI}, then
    for any $f$ semiconcave and  for any $n\in \N$,   $h>0$ and   $a \in\R$
    \begin{equation}\label{est1}
   \|e ^{ S_n f- Q_nf}\|_{\l_n}, \|e ^{Q_nf-S_n f}\|_{\l_n}\le  \prod_{k=1}^{n} (1+C\l_{k}^2h^2)^{\frac{1}{\l_{k}}}
%    \|e ^{ S_n f- Q_nf}\|_{\l_n}, \|e ^{Q_nf-S_n f}\|_{\l_n}\le  \prod_{k=1}^{n} (1+C\l_{k}^2 h )
    \end{equation}
     where $\l_n=a+\r n h$ and $\norm{\cdot}_p$ is the  $L^p$-norm associated to the   measure  $\mu$.
\end{thm}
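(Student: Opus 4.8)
First I would notice that, in the setting of the excerpt, the left-hand side of \eqref{est1} is in fact identically $1$: by \eqref{SGh} and the convexity of $L$, for a fixed total displacement $\sum_{k=0}^{n-1}hq_k=Y$ Jensen's inequality makes the constant choice $q_k\equiv Y/(nh)$ optimal, so $Q_nf(x)=\inf_{y\in\R^N}\{f(y)+nhL((x-y)/(nh))\}=S_{nh}f(x)$ — this is exactly the content of the Remark after Proposition \ref{SG}. Hence $S_nf-Q_nf\equiv0$, both norms in \eqref{est1} equal $\norm{e^{0}}_{\l_n}=\norm{1}_{\l_n}=1$ (recall $\m$ is a probability measure), and the right-hand side is a product of factors each $\ge1$. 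To produce the stated bound in the form in which it is written — and to keep the argument robust if $Q_n$ were replaced by a genuinely space-discretized scheme with $O(h^2)$ local truncation error — I would instead argue as in Theorem \ref{stimeF}.

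Fix a sign and set $g_n=\pm(S_nf-Q_nf)$, $G_n=\norm{e^{g_n}}_{\l_n}$; since $g_0\equiv0$ we have $G_0=1$, so it is enough to prove the one-step estimate $G_{n+1}^{\l_{n+1}}\le G_n^{\l_{n+1}}(1+C\l_{n+1}^{2}h^{2})$ and iterate. Starting from $G_{n+1}^{\l_{n+1}}-G_n^{\l_n}=\int e^{\l_{n+1}g_{n+1}}d\m-\int e^{\l_n g_n}d\m$, I would split the increment exactly as in \eqref{h1}--\eqref{h4}, with $Q_nf$ replaced by $g_n$, into the entropy term $h\r\,\Ent(e^{\l_n g_n})$, a ``spatial'' contribution $\l_n\int e^{\l_{n+1}g_n}\big[e^{\l_{n+1}(g_{n+1}-g_n)}-1\big]d\m$, and the two second-order remainders $\l_n\int e^{\l_n g_n}\big[e^{h\r g_n}-1-h\r g_n\big]d\m$ and $-\l_n G_n^{\l_n}\big[e^{h\r\ln G_n}-1-h\r\ln G_n\big]$ (the last of which is $\le0$). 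The logarithmic Sobolev inequality \eqref{LSI} then bounds the entropy term by $h\l_n^{2}\int e^{\l_n g_n}\tfrac{|Dg_n|^{2}}{2}\,d\m$.

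The extra power of $h$ (compared with the $C\l_{k-1}h$ of Theorem \ref{stimeF}) comes from the fact that $g_n$, $Dg_n$ and $g_{n+1}-g_n$ are here themselves small. Since $f$ is semiconcave, $Q_nf$ is semiconcave uniformly in $n$ by part 4) of Proposition \ref{SG}, the viscosity solution $S_nf$ is semiconcave with the same constant, and by \eqref{P2} (and its continuous analogue) both are uniformly Lipschitz, hence uniformly $C^{1,1}$; a second-order Taylor expansion of the inf-convolution \eqref{HJhb} about its (uniformly bounded) minimiser then yields $\norm{g_{n+1}-g_n}_\infty\le Ch^{2}$, whence $\norm{g_n}_\infty\le Cnh^{2}\le C'h$ on the relevant time range and, by the elementary estimate $\norm{Dw}_\infty\le2\sqrt{C_w\norm{w}_\infty}$ for $w$ with semiconcavity constant $C_w$ (used after splitting $g_n$ into its semiconcave and semiconvex parts, or via an interpolation inequality in $W^{1,\infty}$), $\norm{Dg_n}_\infty\le Ch^{1/2}$. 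Feeding $\norm{Dg_n}_\infty^{2}=O(h)$ into the entropy bound, $|e^{h\r g_n}-1-h\r g_n|\le Ch^{2}$ into the remainder, $|g_{n+1}-g_n|\le Ch^{2}$ (so $e^{\l_{n+1}(g_{n+1}-g_n)}-1=O(\l_{n+1}h^{2})$) into the spatial term, and using $G_n=1+O(h)$ so that $G_n^{\l_n}$ and $G_n^{\l_{n+1}}$ agree up to a factor $1+O(h^{2})$, the analogue of \eqref{h4} reduces to $\l_n G_n^{\l_{n+1}}\big[(G_{n+1}/G_n)^{\l_{n+1}}-1\big]\le C\l_n^{2}h^{2}G_n^{\l_{n+1}}$, i.e. the one-step bound; iterating over $n$ gives the first inequality in \eqref{est1}, and the second follows word for word with $g_n=Q_nf-S_nf$, only changing the side on which the one-sided semiconcavity estimates are used. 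The point I expect to be the real work is the uniform $O(h^{2})$ control of the per-step discrepancy together with the $O(h^{1/2})$ control of $Dg_n$; the rest is a transcription of the computation \eqref{h1}--\eqref{h6}, now carrying the squares $\l_k^{2}h^{2}$, plus the telescoping.
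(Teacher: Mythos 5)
Your opening observation is correct and is, strictly speaking, a complete proof of the statement as written: since $L$ is convex, Jensen's inequality shows the infimum in \eqref{SGh} is attained at equal steps $q_k\equiv Y/(nh)$, so $Q_nf=S_{nh}f$ exactly — the Remark following Proposition \ref{SG} concedes precisely this — and for $\l_k>0$ the inequality \eqref{est1} reduces to $1\le\prod_{k=1}^n(1+C\l_k^2h^2)^{1/\l_k}$. (For general $a\in\R$ some $\l_k$ may be negative and the product may drop below $1$, but the paper's own proof also tacitly assumes $\l_k>0$.) So the theorem only has content for a scheme with a genuine discretization error, and the interesting question is whether your ``robust'' argument would survive there.

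It would not, as written, and the gap is exactly at the point you flag as ``the real work''. You bound the entropy term produced by \eqref{LSI} by claiming $\norm{Dg_n}_\infty\le Ch^{1/2}$ from $\norm{g_n}_\infty\le Ch$ via the interpolation inequality $\norm{Dw}_\infty\le 2\sqrt{C_w\norm{w}_\infty}$. That inequality requires a one-sided Hessian bound on $w$, and $g_n=S_nf-Q_nf$ is a \emph{difference} of two semiconcave functions: it has neither an upper nor a lower second-order bound, and ``splitting $g_n$ into semiconcave and semiconvex parts'' is not available for such a difference (semiconcavity of $f$ gives $D^2S_nf\le C$ and $D^2Q_nf\le C$, hence no control on $D^2(S_nf-Q_nf)$ from either side). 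The paper never estimates $Dg_n$ at all. Its mechanism is the Bobkov--Gentil--Ledoux cancellation: after applying \eqref{LSI}, the term $h\l_n^2\int e^{\l_n g_n}\frac{|DQ_nf-DS_nf|^2}{2}\,d\m$ is carried into the consistency terms and then, writing $\frac{\partial S_nf}{\partial t}=-\sup_q\{q\cdot DS_nf-L(q)\}$ and using $\sup_q\{q\cdot P-L(q)\}=\frac{|P|^2}{2}$ with $P=DS_nf-DQ_nf$, the quadratic gradient term is cancelled \emph{exactly} by the Legendre structure of $H$ (this is \eqref{s4}); semiconcavity is used only to control the remaining one-sided Taylor remainders at order $h$ (see \eqref{s3}), and \eqref{P3} yields the $O(\l_{n+1}^2h^2)$ second-order remainders. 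Your decomposition of $F_{n+1}^{\l_{n+1}}-F_n^{\l_n}$ does match \eqref{h4}, but to make the argument non-vacuously correct you must either reproduce this cancellation or produce a genuinely two-sided $C^{1,1}$ bound on $g_n$, which semiconcavity of $f$ alone does not give you.
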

\begin{proof}
Set
\begin{equation*}
 F_n=\|e ^{Q_n f-S_nf}\|_{\l_n}=\left(\int e^{\l_n (Q_nf(x)-S_nf(x))}d\m\right)^{\frac{1}{\l_n}}.
\end{equation*}
Arguing as in Theorem \ref{stimeF}, we arrive, see \eqref{h4}, to the inequality
\begin{equation*}
    \begin{split}
    & \l_nF_n^{\l_{n+1}}\left[\left(\frac{F_{n+1}}{F_n}\right)^{\l_{n+1}}-1\right]\le
    h\l_n^2\int e^{\l_n (Q_nf(x)-S_nf(x))}\frac{|DQ_n f-DS_nf|^2}{2}d\m\\
    &+\l_n\int e^{{\l_{n+1}(Q_{n}f-S_nf)}}
    \left[e^{{\l_{n+1}(Q_{n+1}f-Q_nf)-(S_{n+1}f-S_nf)}}-1\right]d\m+\\
   &\l_n \int e^{{\l_{n}(Q_nf(x)-S_nf(x))}} \left[e^{{h\r (Q_nf(x)-S_nf(x))}}-1-h\r (Q_nf(x)-S_nf(x))\right]d\m\\
    &-\l_nF_n^{\l_n}[e^{h\r\ln(F_n)}-1-h\r\ln(F_n)].
 \end{split}
\end{equation*}
Since $\l_n\le \l_{n+1}$,  $|e^{{h\r (Q_nf-S_nf)}}-1-h\r (Q_nf-S_nf)|\le Ch^2$ and  the last term  on the right hand side
of previous inequality is negative  we get
\begin{equation*}
\begin{split}
    & \l_nF_n^{\l_{n+1}}\left[\left(\frac{F_{n+1}}{F_n}\right)^{\l_{n+1}}-1\right]\le
    h\l_n^2\int e^{\l_n (Q_nf(x)-S_nf(x))}\frac{|DQ_n f-DS_nf|^2}{2}d\m\\
    &+\l_n\int e^{{\l_{n+1}(Q_{n}f-S_nf)}}
    \left[e^{{\l_{n+1}((Q_{n+1}f-Q_nf)-(S_{n+1}f-S_nf))}}-1\right]d\m+\l_nF_n^{\l_n}Ch^2=\\
    &\l_n\int e^{{\l_{n+1}(Q_{n}f-S_nf)}}
    \left[e^{{h\l_{n+1}(
    \frac{ Q_{n+1}f-Q_nf }{h}-\frac{ S_{n+1}f-S_nf }{h})}}-1-
    h\l_{n+1}\left(\frac{ Q_{n+1}f-Q_nf }{h}-\frac{ S_{n+1}f-S_nf }{h}\right)\right]d\m\\
    &+\l_n\int e^{{\l_{n+1}(Q_{n}f-S_nf)}}h\l_{n+1}\left(\frac{\partial S_n f}{\partial t}
    -\frac{ S_{n+1}f-S_nf }{h}\right)d\m+\\
    &\l_n\int e^{{\l_{n+1}(Q_{n}f-S_nf)}}
    h\l_n\left[\frac{ Q_{n+1}f-Q_nf }{h}-\frac{\partial S_nf}{\partial t}
    +\frac{|DQ_nf-DS_nf|^2}{2}\right]d\m+\\
    &h\l_n^2\int \big(e^{\l_n (Q_nf(x)-S_nf(x))}-e^{\l_{n+1}(Q_nf(x)-S_nf(x))}\big)\frac{|DQ_n f-DS_nf|^2}{2}d\m+
    \l_nF_n^{\l_n}Ch^2.
\end{split}
\end{equation*}
We have (see   \eqref{P3}  and the correspondent property for $S_t$)
\begin{equation*}
    \left|\frac{ S_{n+1}f-S_nf }{h}\right|,  \left|\frac{ Q_{n+1}f-Q_nf }{h}\right| \le C
\end{equation*}
and therefore
\begin{eqnarray}
    &e^{{h\l_{n+1}(
    \frac{ Q_{n+1}f-Q_nf }{h}-\frac{ S_{n+1}f-S_nf }{h})}}-1-
    h\l_{n+1}\left(\frac{ Q_{n+1}f-Q_nf }{h}-\frac{ S_{n+1}f-S_nf }{h}\right)\le C\l_{n+1}^2h^2\label{s2}\\[10pt]
    &e^{\l_n (Q_nf(x)-S_nf(x))}-e^{\l_{n+1}(Q_nf(x)-S_nf(x))}\le Ch\label{s3b}
\end{eqnarray}
and by Hopf-Lax formula
\begin{equation}\label{s3}
\begin{split}
     &\frac{\partial S_nf}{\partial t}(x)  -\frac{ S_{n+1}f(x)-S_nf(x) }{h}=- \sup_{q}\left\{q\cdot DS_{n}f(x)-L(q)
     \right\}\\
     &-\inf_{q}\left\{\frac{S_{n}f(x-hq)-S_nf(x)}{h}+L(q)\right\} \le C_2 h
    \end{split}
\end{equation}
where $C_2$ depends on the semiconcavity constant of $f$.
Moreover since $|P|^2/2=\sup_q\big\{q\cdot P-L(q)\big\}$
\begin{equation}\label{s4}
\begin{split}
    & \frac{ Q_{n+1}f-Q_nf }{h}+\frac{\partial S_nf}{\partial t}
    +\frac{|DQ_nf-DS_nf|^2}{2}\le\\
    &  -\sup_q\big\{-\frac{Q_nf(x-hq)-Q_nf(x)}{h}-L(q)\big\}+\frac{|DQ_nf|^2}{2}-\\
    &\left(\sup_q\big\{q\cdot DQ_nf-L(q)\big\}+\sup_q\big\{q\cdot DS_nf-L(q)\big\}\right)+\frac{|DQ_nf-DS_nf|^2}{2}\\
    &\le C_3h  -\sup_q\big\{q\cdot (DS_nf-DQ_nf)-L(q)\big\}+\frac{|DQ_nf-DS_nf|^2}{2}\le C_3h.
 \end{split}
\end{equation}
By \eqref{s2}, \eqref{s3}, \eqref{s3b} and \eqref{s4}, we get
\begin{equation*}
\begin{split}
    & \l_nF_n^{\l_{n+1}}\left[\left(\frac{F_{n+1}}{F_n}\right)^{\l_{n+1}}-1\right]\le
    \l_n C\l_{n+1}^2 h^2 F_n^{\l_n}
\end{split}
\end{equation*}
and therefore  we get
\begin{equation*}
   F_n^{\l_{n+1}}\le   F_n^{\l_{n+1}}(1+C\l_{n+1}^2h^2).
\end{equation*}
Iterating over $n$ and taking into account that $F_0=\|e ^{Q_0 f-S_0f}\|_{\l_n}=\|e ^{f-f}\|_{\l_n}=1$
we get the estimate
\begin{equation*}
    \|e ^{Q_nf-S_n f}\|_{\l_n}\le  \prod_{k=1}^{n} (1+C\l_{k}^2h^2)^{\frac{1}{\l_{k}}}.
\end{equation*}
Exchanging the role of $S_nf$ and $Q_nf$ we get the other estimate in \eqref{est1}.
\end{proof}
%%%%%
%   %
%%%%%
\begin{cor}\label{Cor1}
With the same notation of Theorem \ref{stimeErr}, if $f$ is semi-concave, then
for any $t\in [0,T]$, $t=nh$, we have
\begin{equation}\label{est2}
 \int(Q_nf-S_nf)d\m, \, \int(Q_nf-S_nf)d\m \le Ch
\end{equation}
with  $C$ depending on $T$ and the semi-concavity constant of $f$.
Moreover for any $p<1$
  \begin{equation}\label{est3}
  \m \{|S_nf-Q_nf|\ge h^p\}\le C e^{-1/h^{1-p}}.
\end{equation}
\end{cor}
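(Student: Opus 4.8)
The plan is to deduce both assertions from Theorem~\ref{stimeErr} by elementary convexity estimates; it suffices to treat the difference $Q_nf-S_nf$, the opposite sign being covered by the companion inequality in \eqref{est1}. For the mean bound \eqref{est2}, write $F_n=\|e^{Q_nf-S_nf}\|_{\l_n}$ as in the proof of Theorem~\ref{stimeErr}. Taking logarithms in \eqref{est1} and using $\log(1+x)\le x$ gives $\log F_n\le\sum_{k=1}^n\l_k^{-1}\log(1+C\l_k^2h^2)\le Ch^2\sum_{k=1}^n\l_k$, while Jensen's inequality for the convex function $s\mapsto e^{s}$ yields $\l_n\int(Q_nf-S_nf)\,d\m\le\log\int e^{\l_n(Q_nf-S_nf)}\,d\m$, that is $\int(Q_nf-S_nf)\,d\m\le\log F_n$. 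Fixing $a>0$, one has $\sum_{k=1}^n\l_k=na+\r h\,\tfrac{n(n+1)}{2}\le C_T/h$ since $nh=t\le T$; hence $\int(Q_nf-S_nf)\,d\m\le Ch$, with $C$ depending on $T$ and, through the constant of Theorem~\ref{stimeErr} (see \eqref{s3} and \eqref{s4}), on the semiconcavity constant of $f$. The bound for $\int(S_nf-Q_nf)\,d\m$ follows identically from the other inequality in \eqref{est1}.

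For the concentration estimate \eqref{est3} I would run a Chernoff-type (exponential Markov) argument, exploiting that the parameter $a$ in Theorem~\ref{stimeErr}, and hence the whole chain $\l_k=a+\r kh$, is at our disposal. For $\l_n>0$,
\[
\m\{Q_nf-S_nf\ge h^p\}\le e^{-\l_n h^p}\int e^{\l_n(Q_nf-S_nf)}\,d\m=e^{-\l_n h^p}F_n^{\l_n}\le e^{-\l_n h^p}\prod_{k=1}^n\tonde{1+C\l_k^2h^2}^{\l_n/\l_k},
\]
and symmetrically for $\m\{S_nf-Q_nf\ge h^p\}$; summing the two controls $\m\{|S_nf-Q_nf|\ge h^p\}$. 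It remains to choose $a=a(h)$ so that the gain $e^{-\l_n h^p}$ dominates the exponential moment: since $\r kh\le\r T$ for $k\le n$, for $a$ large all the $\l_k$ are comparable to $\l_n$, so that $\log\prod_{k=1}^n(1+C\l_k^2h^2)^{\l_n/\l_k}\le C\l_n h^2\sum_{k=1}^n\l_k\le C_T\,\l_n^2\,h$; selecting $\l_n$ at the scale that balances $\l_n h^p$ against $C_T\l_n^2 h$ then produces an exponentially small bound of the type asserted in \eqref{est3}, with constants depending on $t$, $T$ and the semiconcavity constant of $f$. Since Theorem~\ref{stimeErr} holds for every $a\in\R$ and every $h>0$, this $h$-dependent choice of $a$ is admissible.

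The only genuinely delicate point is this last balance: $\l_n$ must be large enough for $e^{-\l_n h^p}$ to be truly small, yet small enough that the exponential moment $\int e^{\l_n(Q_nf-S_nf)}\,d\m$ — controlled by $\prod_{k=1}^n(1+C\l_k^2h^2)^{\l_n/\l_k}$, which along the admissible chains $\l_k=a+\r kh$ behaves like $\exp(C_T\l_n^2 h)$ when $nh\le T$ — does not outgrow it. Extracting the precise exponent in \eqref{est3} is exactly this optimization over $a$; by contrast, once Theorem~\ref{stimeErr} is available the mean estimate \eqref{est2} is routine.
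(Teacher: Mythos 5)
Your derivation of \eqref{est2} is correct and is essentially the paper's own argument: Jensen's inequality to pass from the exponential moment to the mean, $\log(1+x)\le x$, and the bound $\sum_{k=1}^n\l_k\le C_T/h$ for $nh=t\le T$. Nothing to add there.

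The proof of \eqref{est3}, however, has a genuine gap at exactly the point you yourself flag as ``delicate'' and then leave undone. From \eqref{est1} your bound on the exponential moment is $\int e^{\l_n(Q_nf-S_nf)}\,d\m\le e^{C_T\l_n^2h}$, i.e.\ a sub-Gaussian bound with variance proxy of order $h$. The Chernoff optimization of $-\l_nh^p+C_T\l_n^2h$ over $\l_n$ selects $\l_n\sim h^{p-1}$ and yields the exponent $-c\,h^{2p-1}=-c/h^{1-2p}$. This is \emph{not} ``of the type asserted in \eqref{est3}'': it gives no decay at all for $1/2\le p<1$, and for $p<1/2$ it gives $e^{-c/h^{1-2p}}$, which is strictly weaker than the claimed $e^{-1/h^{1-p}}$. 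So the balancing step you postpone cannot, starting from the moment bound $e^{C_T\l_n^2h}$, produce the corollary as stated; the optimization is the whole content of the proof here, and carried out with the estimate you actually have it proves a weaker statement. For comparison, the paper does not optimize: it fixes $a=1/h$ and $r=h^p$ outright and asserts $e^{-\l_nr}\prod_{k=1}^n(1+C\l_k^2h^2)^{\l_n/\l_k}\le e^{-ar+Ch}$, i.e.\ that for this large value of $a$ the moment factor contributes only $e^{Ch}$. That is precisely the quantity your $e^{C_T\l_n^2h}$ estimate fails to control (note that for $a=1/h$ each factor $1+C\l_k^2h^2$ is of order $1+C$ and there are $n=t/h$ of them), so to close the argument you would need a refinement of Theorem \ref{stimeErr} in which the per-step error is $o(\l_k^2h^2)$ for large $\l_k$, not merely a cleverer choice of $a$ in the estimate as it stands.
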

\begin{proof}
We first observe that, since $e^t$ is a convex function, we have
$e^ {\int\l_n(Q_nf-S_n f)d\m}\le \int e^{\l_n(Q_nf-S_n f)}d\m$, hence by \eqref{est1}
\begin{align*}
e^ {\int\l_n(Q_nf-S_n f)d\m}\le \prod_{k=1}^{n} (1+C\l_{k}^2h^2) \le
\prod_{k=1}^{n} e^{C\l_{k}^2h^2}=e^{\sum_{k=1}^{n} C\l_{k}^2h^2}
\end{align*}
and therefore
\begin{equation}
\begin{split}
\int(Q_nf-S_n f)d\m\le \sum_{k=1}^{n} C\frac{\l_{k}^2}{\l_n}h^2\le C\sum_{k=1}^n\l_k  h^2=
C(na+\half n(n+1)h)h^2\le Ch
\end{split}
\end{equation}
for $t=nh\in [0,T]$ where $C$ depends on $T$ and semiconcavity constant of $f$.\\
To prove estimate \eqref{est3} observe that $\m \{|S_nf-Q_nf|\ge r\}=\m \{ S_nf-Q_nf \ge r\}+\m \{Q_nf-S_nf\ge r\}$ and
\begin{align*}
\m \{ S_nf-Q_nf \ge r\}\le
 \frac{1}{e^{\l_n r}}\int e^{\l_n(S_nf-Q_nf)}d\m \le e^{-\l_n r}\prod_{k=1}^{n} (1+C\l_{k}^2h^2)^{\l_n/\l_k}\le e^{-ar+Ch}.
\end{align*}
Taking $r=h^p$ and $a=\frac 1 h$ in the previous estimate we get \eqref{est3}
\end{proof}
\begin{rem}
The estimate \eqref{est3} can be interpreted as a concentration inequality of truncation error between the solution of the continuous
problem and of  the discrete one.
\end{rem}

%%%%%%%%%%%%%%%%%%%%%%%%%%%%%%%%%%%%%%%%%%%%%%%%%%%%%%%%%%%%%%%%%%%
\bibliographystyle{amsplain}

\end{document}